\newtheorem{theorem}{Theorem}[section]
\newtheorem{proposition}[theorem]{Proposition}
\newtheorem{lemma}[theorem]{Lemma}
\newtheorem{conjecture}[theorem]{Conjecture}
\theoremstyle{definition}
\renewenvironment{proof}{\noindent {\bf Proof:}}{$\Box$ \vspace{2 ex}}
\newtheorem{remark}[theorem]{Remark}
\newtheorem*{theorem*}{Theorem}
\newtheorem*{proposition*}{Proposition}
\newtheorem*{lemma*}{Lemma}
\theoremstyle{remark}
\newcommand{\defeq}{\vcentcolon=}
\newcommand\nc{\newcommand}
\nc{\on}{\operatorname}
\nc\renc{\renewcommand}
\nc{\BR}{\mathbb R}
\nc{\BC}{\mathbb C}
\nc{\BQ}{\mathbb Q}
\nc{\BZ}{\mathbb Z}
\nc{\BN}{\mathbb N}
\nc{\BP}{\mathbb P}
\nc{\BA}{\mathbb A}
\nc{\Hom}{\on{Hom}}
\nc{\wt}{\widetilde}
\nc{\vspan}{\on{span}}
\nc{\ord}{\on{ord}}
\nc{\im}{\on{im}}
\nc{\Mat}{\on{Mat}}
\nc{\can}{\on{can}}
\nc{\coker}{\on{coker}}
\nc{\ev}{\on{ev}}
\nc{\Tr}{\on{Tr}}
\nc{\End}{\on{End}}
\nc{\swap}{\on{swap}}
\nc{\Set}{\on{Set}}
\nc{\bC}{{\bm C}}
\nc{\bc}{{\bm c}}
\nc{\bD}{{\bm D}}
\nc{\bd}{{\bm d}}
\nc{\bE}{{\bm E}}
\nc{\be}{{\bm e}}
\nc{\bff}{{\bm f}}
\nc{\Bf}{\mathbb{f}}
\nc{\adj}{\on{adj}}
\nc{\tensor}[3]{#1 \underset{#2}\otimes #3}
\nc{\Nat}{\on{Nat}}
\nc{\op}{\on{op}}
\nc{\funct}{\on{funct}}
\nc{\Ob}{\on{Ob}}
\nc{\fR}{\mathfrak{R}}
\nc{\Vect}{\on{Vect}}
\nc{\ns}{\on{non-spec}}
\nc{\ol}{\overline}
\nc{\ul}{\underline}
\nc{\w}{\omega}
\nc{\nlog}{\on{nlog}}
\nc{\Aut}{\on{Aut}}
\nc{\Gal}{\on{Gal}}
\nc{\Poss}{\on{Poss}}
\nc{\ksep}{\on{sep}}
\nc{\low}{\on{low}}
\nc{\Stab}{\on{Stab}}
\nc{\pp}{\mathfrak{p}}
\nc{\OO}{\mathcal{O}}
\nc{\mm}{\mathfrak{m}}
\nc{\qq}{\mathfrak{q}}
\nc{\Nm}{\on{Nm}}
\nc{\Ann}{\on{Ann}}
\nc{\gug}{\mathfrak{g}}
\nc{\hug}{\mathfrak{h}}
\nc{\mf}{\mathfrak}
\nc{\mc}{\mathcal}
\nc{\Sym}{\on{Sym}}
\renc{\O}{\mc{O}}
\nc{\al}{\alpha}
\def\Z{{\mathbb Z}}
\def\End{{\rm End}}
\def\Sym{{\rm Sym}}
\def\Sel{{\rm Sel}}
\def\PGL{{\rm PGL}}
\def\ls{{\rm ls}}
\def\ur{{\rm ur}}
\def\mon{{\rm mon}}
\def\Stab{{\rm Stab}}
\def\Sym{{\rm Sym}}
\def\Jac{{\rm Jac}}
\def\Cl{{\rm Cl}}
\def\O{{\mathcal O}}
\def\cO{{\mathcal O}}
\def\PSO{{\rm PSO}}
\def\Im{{\rm Im}}
\def\adj{{\rm adj}}
\def\Aut{{\rm Aut}}
\def\irr{{\rm irr}}
\def\Vol{{\rm Vol}}
\def\Avg{{\rm Avg}}
\def\R{{\mathbb R}}
\def\FF{{\mathcal F}}
\def\cM{{\mathcal M}}
\def\cS{{\mathcal S}}
\def\EE{{\mathcal E}}
\def\Q{{\mathbb Q}}
\def\J{{\mathcal J}}
\def\cA{{\mathcal A}}
\def\cI{{\mathcal I}}
\def\Z{{\mathbb Z}}
\def\Q{{\mathbb Q}}
\def\C{{\mathbb C}}
\def\Res{{\rm{Res}}}
\def\@tocline#1#2#3#4#5#6#7{\relax
  \ifnum #1>\c@tocdepth
  \else
    \par \addpenalty\@secpenalty\addvspace{#2}
    \begingroup \hyphenpenalty\@M
    \@ifempty{#4}{
      \@tempdima\csname r@tocindent\number#1\endcsname\relax
    }{
      \@tempdima#4\relax
    }
    \parindent\z@ \leftskip#3\relax \advance\leftskip\@tempdima\relax
    \rightskip\@pnumwidth plus4em \parfillskip-\@pnumwidth
    #5\leavevmode\hskip-\@tempdima
      \ifcase #1
       \or\or \hskip 1em \or \hskip 2em \else \hskip 3em \fi
      #6\nobreak\relax
    \dotfill\hbox to\@pnumwidth{\@tocpagenum{#7}}\par
    \nobreak
    \endgroup
  \fi}
\begin{document}

\title{\vspace*{-35pt}The second moment of the size of the $2$-class group \\ of monogenized cubic fields}

\author{Manjul Bhargava, Arul Shankar, and Ashvin A.~Swaminathan}

\maketitle

\begin{abstract}
We prove that when totally real (resp., complex) monogenized cubic number fields are ordered by height, the second moment of the size of the $2$-class group is at most $3$ (resp., at most $6$). In the totally real case, we further prove that the second moment of the size of the narrow $2$-class group is at most $9$. This result gives further evidence in support of the general observation, first made in work of Bhargava--Hanke--Shankar and recently formalized into a set of heuristics in work of Siad--Venkatesh, that monogenicity has an altering effect on class group distributions. All of the upper bounds we obtain are tight, conditional on tail estimates.
\end{abstract}

%\setcounter{tocdepth}{2}
%
%\tableofcontents

\section{Introduction}

In~\cite{BSHpreprint}, Bhargava, Hanke, and Shankar made a surprising observation concerning the distribution of the $2$-class group of cubic number fields: they proved that when one passes to the thin subfamily of monogenized fields---i.e., fields $K$ whose rings of integers $\mc{O}_K$ are of the form $\mc{O}_K = \Z[\theta]$, where $\theta \in \mc{O}_K$ is a specified generator---the \emph{average} size of the $2$-class group increases, relative to the corresponding average for the full family of cubic fields; see Theorem~\ref{thm-prev} for a precise formulation. Their result initiated a program of research in which one seeks to understand the impact of monogenicity and other global arithmetic conditions on class group distributions. For instance, it was later shown by Shankar, Siad, and Swaminathan~\cite{SSSpreprint} that when one passes to the even thinner subfamily of unit-monogenized cubic fields---i.e., fields $K$ whose rings of integers $\mc{O}_K$ can be expressed as $\mc{O}_K = \Z[\theta]$, where $\theta \in \mc{O}_K^\times$ is a specified generator---the average size of the $2$-class group increases even further. In a different direction, Siad~\cite{Siadthesis1} proved that a similar increasing effect occurs in the average size of the $2$-class group for monogenized number fields of any odd degree $n \geq 3$.

The purpose of this paper is to study the \emph{second moment} of the size of the $2$-class group in the family of monogenized cubic number fields. Specifically, we prove that the second moment is bounded, and conditional on a tail estimate, we prove that the bound we obtain is optimal; see Theorem~\ref{thm-main} for a precise formulation. This result is the first of its kind toward determining the second moment of a class group distribution for number fields of degree greater than two, and it provides further evidence of the observation that monogenicity has an increasing effect on the $2$-class groups of number fields.

\subsection{Monogenized cubic number fields}

In this subsection, we state precisely what it means for a cubic number field to be monogenized, and we introduce the height function by which we order such fields.

\subsubsection{The definition} \label{sec-mondef}

Let $R$ be a principal ideal domain with fraction field $F$. Recall that an \'{e}tale $F$-algebra $K$ is said to be {\bf monogenic} if its ring of integers $\mc{O}_K$ is singly generated as a $\Z$-algebra, i.e., $\O_K=\Z[\alpha]$ for some $\alpha\in\O_K$; such an $\alpha$
is then called a {\bf monogenizer} of $K$ or of $\O_K$. A {\bf monogenized \'{e}tale algebra} consists of the data $(K,\alpha)$ of a monogenic \'{e}tale algebra $K$ along with a specific choice of monogenizer $\alpha \in \O_K$; when the choice of monogenizer $\alpha$ is implicit, we write just $K$ to denote the pair $(K,\alpha)$. Two monogenized \'{e}tale algebras $(K,\alpha)$ and $(K',\alpha')$ are said to be {\bf isomorphic} if there exists a ring isomorphism from $\O_K$ to $\O_{K'}$ taking $\alpha$ to $\pm \alpha' +m$ for some $m \in R$. 

For a binary $n$-ic form $f$ over $R$, let $R_f$ denote the ring of global sections of the closed subscheme of $\mathbb{P}^1_R$ cut out by $f$, and let $K_f = R_f \otimes_{R} F$ be the associated algebra of fractions. When $f$ has discriminant $\Delta(f) \neq 0$, we say that $f$ is {\bf maximal} if $R_f$ is the maximal order in $K_f$. We say that $f$ is {\bf monic} if its leading coefficient is $1$, i.e., if $f(1,0) = 1$, and we say that two monic forms $f$ and $f'$ are {\bf equivalent} if $f(x,y) = f'(x+my,y)$ for some $m \in R$. Observe that when $R = \Z$, the map sending $f \mapsto K_f$ defines a bijection from the set of monic maximal irreducible binary $n$-ic forms over $\Z$ to the set of monogenized degree-$n$ number fields, and further notice that this map remains a bijection upon passing to equivalence classes of forms and isomorphism classes of monogenized fields. Given a monogenized field $(K,\alpha)$, we may therefore speak of its {\bf defining binary form}, which is the homogenization of the minimal polynomial of $\alpha$.

\subsubsection{The height function} \label{sec-height}
 %While all quadratic fields are monogenic, not all cubic fields are.
We now restrict our consideration to monogenized \emph{cubic} fields. As explained in~\cite[\S1.1]{BSHpreprint}, it is natural to order isomorphism classes of such fields by the heights of their defining binary forms. %Specifically, if $(K,\alpha)$ is a monogenized cubic field, its isomorphism class contains a unique representative $(K,\alpha_0)$ such that $\Tr(\alpha_0)  = 0$. 
For a monogenized cubic field $(K,\alpha)$ with defining binary form $f(x,y) = x^3 + ax^2y + bxy^2 + cy^3$, we define its {\bf height} as follows:
\begin{align*}
H(K,\alpha) \defeq H(f) \defeq \max\{|I(f)|^3, J(f)^2/4\}, \quad \text{where} \\
I(f) \defeq a^2 - 3b \quad \text{and} \quad J(f) \defeq -2a^3 + 9ab - 27c.
\end{align*}
Observe that $I(f(x,y)) = I(f(x+my,y))$ and $J(f(x,y)) = J(f(x+my,y))$ for all $m \in \Z$; in fact, the pair $(I(f),J(f))$ uniquely determines the equivalence class of the monic binary cubic form $f$. Thus, the height function $H$ is isomorphism-invariant and gives a well-defined notion of height for isomorphism classes of monogenized cubic fields.

\subsection{Main result}

In~\cite{BSHpreprint}, Bhargava, Hanke, and Shankar proved the following theorem, which determines the \emph{average} size of the $2$-class group $\on{Cl}_2(K) \defeq \on{Cl}(K)[2]$ and narrow $2$-class group $\on{Cl}_2^+(K) \defeq \on{Cl}^+(K)[2]$ as $K$ ranges through monogenized cubic fields ordered by height:
\begin{theorem}[\protect{\cite[Theorem~7]{BSHpreprint}}] \label{thm-prev}
Let $K$ run through all isomorphism classes of monogenized cubic fields with normal closure having Galois group isomorphic to $S_3$, ordered by height. Then:
 \begin{itemize}
  \item[{\rm (a)}] The average size of $\Cl_2(K)$ over totally real
    cubic fields $K$ is equal to $3/2$;
  \item[{\rm (b)}] The average size of $\Cl_2(K)$ over complex cubic
    fields $K$ is equal to $2$; and
  \item[{\rm (c)}] The average size of $\on{Cl}_2^+(K)$ over totally real cubic fields $K$ is equal to $5/2$.
  \end{itemize}
\end{theorem}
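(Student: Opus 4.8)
The plan is to convert the determination of the average of $\Cl_2(K)$ into an orbit-counting problem and then to carry out a geometry-of-numbers count over the thin family of monic forms. The starting point is Bhargava's parametrization of quartic rings by pairs of integral ternary quadratic forms: on the lattice $V(\BZ) = \BZ^2 \otimes \Sym^2\BZ^3$ the group $\SL_2(\BZ)\times\SL_3(\BZ)$ acts with a single polynomial invariant, and the associated resolvent is a binary cubic covariant $g$. First I would make precise, for a monic maximal irreducible $g$ with $K = K_g$, the bijection between $\SL_3(\BZ)$-orbits on the fiber $\{(A,B)\in V(\BZ): \mathrm{res}(A,B)=g\}$ and an orientation-refined form of the $2$-torsion of $K$; in the ordinary case this recovers $\Cl_2(K)$, while retaining the sign data at the three archimedean places recovers $\Cl_2^+(K)$ in the totally real case. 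Tracking the distinguished orbit (the identity class) and the orbit stabilizers is essential, since the average will be $1$ plus the average number of nontrivial orbits.

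With the parametrization fixed, I would interchange summation. The numerator $\sum_g \#\Cl_2(K_g)$, summed over monic maximal irreducible $g$ with $H(g)<X$ up to translation-equivalence, becomes the number of $\SL_3(\BZ)$-orbits on $V(\BZ)$ whose resolvent is monic with $H<X$; the denominator is the number of admissible pairs $(I,J)$ of bounded height, which is $\asymp X^{5/6}$ by counting lattice points in the region $|I|\le X^{1/3}$, $|J|\le 2X^{1/2}$. Since the monic locus is $\SL_3$-invariant of codimension one in $V$ (its quotient being the three-dimensional space of monic binary cubic forms), this is a bona fide Davenport--Bhargava orbit count, albeit over a sparse slice.

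To count the orbits I would use the averaging method: build a fundamental domain $\mc{F}$ for $\SL_3(\BZ)$ acting on the monic locus of $V(\BR)$, average over a bounded set of representative resolvents (Bhargava's thickening trick), and evaluate the number of $V(\BZ)$-points of height below $X$ in $\mc{F}$ as an archimedean volume times a product of local masses, with error terms. I would then sieve, imposing maximality of $R_g$ at each prime and restricting to irreducible ($S_3$) forms, controlling the tail by uniformity estimates that bound the count of resolvents divisible by large squares.

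The hard part will be the cusp analysis. Reducible and non-maximal pairs pile up in the non-compact part of $\mc{F}$, and one must determine exactly which of these contribute to the main term and which are negligible; this bookkeeping is genuinely delicate over the monic slice, because fixing the leading coefficient reshapes the relevant region so that contributions which are of lower order for the full family of cubic fields now survive into the main term. This is essentially the mechanism behind the altering effect. Once the surviving regions are identified, the constants $3/2$ and $2$ follow by evaluating the archimedean volume over the totally real ($\disc(g)>0$) and complex ($\disc(g)<0$) real orbits respectively, weighted by the appropriate stabilizer and sign data and multiplied by the convergent Euler product of local densities; the narrow totally real average $5/2$ arises from the same computation upon retaining all sign data at the three real places.
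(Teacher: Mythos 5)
First, a point of order: the present paper does not prove this theorem at all --- it is quoted verbatim from~\cite[Theorem~7]{BSHpreprint}. So the relevant comparisons are (i) the cited source's argument, whose general shape (parametrization of $2$-torsion by pairs of integral ternary quadratic forms with monic resolvent, followed by geometry of numbers and a maximality sieve) your sketch follows, and (ii) the independent re-derivation that this paper's own machinery supplies: Theorem~\ref{thm-countingselmers} with $n=2$ gives $\Avg(2;\EE;\cS)=1+2\cM(2;\EE;\cS)$, Propositions~\ref{prop-classgroup},~\ref{propsubsol}, and~\ref{prop-narrowsub} identify $\Cl_2(K)$ and $\on{Cl}_2^+(K)$ with Selmer groups for the unramified-type structures on $E_f$, and the masses \eqref{eq-infmass}, \eqref{eq-pmass}, \eqref{eq-infmassc}, \eqref{eq-pmassc} then yield $1+2\cdot\tfrac14=\tfrac32$, $1+2\cdot\tfrac12=2$, and, via \eqref{eq-inclusionexclusion1}, $2-\tfrac32+2=\tfrac52$. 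Your route is legitimate in principle, but as written it has two genuine gaps.

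The first gap is in the counting step, which would fail as described. The monic locus is a codimension-one hypersurface in $V(\R)\simeq\R^{12}$, so its lattice points cannot be counted as ``archimedean volume of a fundamental domain times a product of local masses'': Davenport--Bhargava averaging counts lattice points in full-dimensional regions, and a measure-zero slice contributes no volume. (There is also a normalization problem: with the standard resolvent $4\det(Ax-By)$ of Bhargava's quartic parametrization, the integral fiber over a \emph{monic} cubic is empty, so the integral model itself must be changed.) The repair --- and the actual content of the cited proof --- is to use that $\SL_3$ fixes the determinant of the leading form $A$, which monicity pins down; integral $A$ of that determinant fall into finitely many $\SL_3(\Z)$-classes, and the count reduces to orbits of the stabilizer $\SO_A(\Z)$ acting on the \emph{six}-dimensional lattice of single ternary forms $B$ with $\det(xA-yB)$ of bounded height. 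One must then develop reduction theory, cusp estimates, and uniformity/maximality sieves for this orthogonal group, which is precisely where the work lies and which your plan replaces with an $\SL_3(\Z)$ count that does not exist. The second gap is your proposed mechanism for the altering effect: it is not the case that cusp contributions, negligible for the full family, ``survive into the main term'' over the monic slice. In both the cited proof and this paper's re-derivation, irreducible points in the cusp remain negligible; the doubling of the nontrivial part of the average comes from the change of acting group and hence of the constants --- in this paper's language, from the Tamagawa number $\tau(\PGL_2)=2$ multiplied against the unramified masses $\cM_\infty=\tfrac14$ (resp.\ $\tfrac12$) and $\cM_p=1$. Relatedly, your sketch asserts rather than derives the values $3/2$, $2$, $5/2$; since the entire content of the theorem is these constants, the step that produces them cannot be left as ``the constants follow by evaluating the archimedean volume.''
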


In this paper, we prove the following result, which is an analogue of Theorem~\ref{thm-prev} for the \emph{second moment} of the size of the $2$-class group and narrow $2$-class group of monogenized cubic fields:

\begin{theorem} \label{thm-main}
Let $K$ run through all isomorphism classes of monogenized cubic fields with normal closure having Galois group isomorphic to $S_3$, ordered by height. Then:
 \begin{itemize}
  \item[{\rm (a)}] The second moment of the size of $\Cl_2(K)$ over totally real
    cubic fields $K$ is at most $3$;
  \item[{\rm (b)}] The second moment of the size of $\Cl_2(K)$ over complex cubic
    fields $K$ is at most $6$; and
  \item[{\rm (c)}] The second moment of the size of $\on{Cl}_2^+(K)$ over totally real cubic fields $K$ is at most $9$.
  \end{itemize}
  Moreover, conditional on a tail estimate, the upper bounds of $3$, $6$, and $9$ are the exact values of the second moments.
\end{theorem}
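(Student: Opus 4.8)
The plan is to evaluate each second moment by a geometry-of-numbers count of integral orbits, in close parallel with the proof of Theorem~\ref{thm-prev} but with the governing representation enlarged so as to parametrize \emph{pairs} of $2$-torsion classes. The first step is a formal reduction: for any finite elementary abelian $2$-group $A$,
\[
|A|^2 \;=\; 1 + 2\bigl(|A|-1\bigr) + \bigl(|A|-1\bigr)^2 .
\]
Taking $A=\Cl_2(K)$ (resp.\ $\Cl_2^+(K)$) and summing over the fields $K$ of height below $X$, the contribution of the constant term is the number $N(X)$ of such fields, while the contribution of $2(|A|-1)$ is $2(\mu_1-1)N(X)+o(N(X))$, where $\mu_1\in\{3/2,\,2,\,5/2\}$ is the first moment supplied by Theorem~\ref{thm-prev}. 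Hence the second moment equals $1+2(\mu_1-1)+M$, where $M$ is the average of $(|\Cl_2(K)|-1)^2$, i.e.\ the average number of \emph{ordered pairs of nontrivial} $2$-torsion classes; the claimed totals $3,6,9$ correspond exactly to the residual averages $M=1,3,5$ in the three cases, so that computing $M$ is the entire content of the theorem.

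The heart of the argument is an orbit parametrization of these pairs. In the first-moment setting, each nontrivial $2$-torsion class of $K_f$ is encoded by a distinguished integral orbit of a group $G_1$ on a coregular representation $V_1$ whose resolvent (invariant) map to the space of binary cubic forms recovers the cubic ring $R_f$, the fiber over $f$ being in bijection with $\Cl_2(K_f)$. I would seek the analogous statement for pairs by passing to a representation $V$ whose resolvent map has fiber over $f$ in bijection with $\Cl_2(K_f)\times\Cl_2(K_f)$, playing the role of a doubling of $V_1$ relative to the common binary-cubic-form base; the pairs of nontrivial classes then correspond to the \emph{irreducible} (projective, nondegenerate) orbits, while the strata with a trivial coordinate recover the lower-dimensional loci already absorbed into the reduction above. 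Setting up this bijection---fixing the acting group $G$ and the correct equivalence relation, isolating the distinguished orbit attached to the identity pair, and tracking the real and narrow sign data at the archimedean place---is the principal algebraic task, and it is here that the composition law on $\Cl_2$ must be matched against the orbit geometry of $V$.

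Granting the parametrization, computing $M$ reduces to counting irreducible $G(\Z)$-orbits on $V(\Z)$ with bounded invariants. I would carry this out by Bhargava's averaging method: realize the orbits as lattice points in a fundamental domain for $G(\Z)\backslash G(\R)$, average over a compact part of the group, and extract the main term as a product of an archimedean volume with a convergent product of $p$-adic densities. Monogenicity enters as the codimension-one slice $f(1,0)=1$ on the base, so that the base is effectively the $(I,J)$-plane ordered by $H(f)=\max\{|I(f)|^3,\,J(f)^2/4\}$ and the number of fields of height below $X$ grows like a constant times $X^{5/6}$; the irreducible-orbit count must then be shown to grow like the corresponding constant times $X^{5/6}$. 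The contributions of reducible orbits and of the cusp of the fundamental domain must be separated off, with the generic stratum producing the main term whose volume and local-density computation---after the maximality sieve described next---is arranged to yield the values $M=1,3,5$.

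The main obstacle, and the reason the exact evaluation is conditional, is the sieve from orbit counts to genuine class-group counts: the restriction to \emph{maximal} orders (and to fields with Galois group $S_3$). Since maximality at $p$ is a congruence condition, one can compute the orbit count subject to maximality at all primes up to a parameter $Y$; because globally maximal forms satisfy more conditions than those maximal only up to $Y$, this already gives an \emph{upper} bound for the count over globally maximal forms, and as $Y\to\infty$ the associated Euler product converges to the density producing exactly $3,6,9$. Thus the upper bounds hold unconditionally, using only finitely many prime conditions at a time. The matching \emph{lower} bound requires showing that the forms failing maximality at some prime $p>Y$ contribute negligibly, uniformly in $Y$ and $X$---a tail estimate on the number of monogenic cubic forms, weighted by the size of the relevant orbit count, that are nonmaximal at large primes. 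I expect this uniformity estimate, which must also absorb the rare fields with atypically large $2$-torsion, to be the genuine analytic crux, and it is precisely why the exact second moments remain conditional.
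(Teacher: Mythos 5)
Your formal reduction $|A|^2 = 1 + 2(|A|-1) + (|A|-1)^2$ and the resulting targets $M=1,3,5$ are arithmetically consistent with the theorem (and with the paper's formula $1+6\cM+8\cM^2$, since $M = 2\cM+8\cM^2$ with $\cM = \tfrac14, \tfrac12$). But the proof has a genuine gap at its center: everything rests on a representation $V$ whose resolvent fiber over a monic cubic form $f$ is in bijection with $\Cl_2(K_f)\times\Cl_2(K_f)$, and you never construct it---you only posit it as ``the principal algebraic task.'' No such symmetric ``doubled'' representation with product fibers is known, and the paper does not produce one. Instead it proceeds asymmetrically and through elliptic curves: it first identifies $\Cl^*(K_f)[2]$ with the group of everywhere-unramified classes in $H^1(\Q,E_f[2])$ for the curve $E_f\colon y^2=f(x)$ (Proposition~\ref{prop-classgroup}), i.e.\ as the Selmer group of an ``unramified'' Selmer structure; this requires the nontrivial local fact that unramified classes are everywhere locally soluble (Proposition~\ref{propsubsol}, a N\'eron-model computation), without which no soluble-orbit parametrization applies at all. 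It then invokes the second-moment machinery of~\cite{BSSpreprint} (Theorem~\ref{thm-countingselmers}): a nontrivial first class $\sigma_1$ is encoded by a binary quartic form $g$, and the second class is parametrized by special irreducible $\PSO_{\cA}(\Z)$-orbits on quaternary quadratic forms $B$ with $\det(x\cA+yB)=g^{\mon}$---that is, by $2$-Selmer elements of the \emph{torsor} $C_g$, not of $E_f$ itself, with two extra ``trivial'' elements per $g$ accounting for the cross term $4\cM$. Your proposal's fiber-product picture is not a simplification of this; it is the missing ingredient, and in its absence the rest of the geometry-of-numbers plan has nothing to count.

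Two further points. First, part (c) is where your sketch is thinnest precisely where the paper must work hardest: classes unramified at all finite places can be \emph{insoluble} at $\R$ (they correspond to negative-definite binary quartic forms), so $\Cl_2^+(K)$ is \emph{not} the Selmer group of any sub-soluble structure, and ``tracking the real and narrow sign data'' cannot be done inside a single orbit count. The paper needs Armitage--Fr\"ohlich, Wood's parametrization of quartic rings by binary quartic forms, the quadratic twist $f^*(x,y)=f(x,-y)$, and the inclusion-exclusion identity of Proposition~\ref{prop-narrowsub} to reduce to sub-soluble structures on $E_f$ and $E_{f^*}$. Second, you locate the conditionality in the maximality sieve at large primes; in the paper the maximality/largeness conditions are handled by the standard (unconditional) acceptability argument, and the conditional tail estimate is the one inherited from~\cite{BSSpreprint} for the count of special $\PSO_{\cA}(\Z)$-orbits on quaternary quadratic forms with monicized resolvent---a uniformity estimate for the larger representation, not for the cubic base. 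Your instinct that the upper bound is unconditional while equality requires a tail bound is structurally right, but the crux is in a different count than the one you name.
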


In fact, we prove that Theorem~\ref{thm-main} holds---i.e., the second moment values remain unchanged---if we instead average over just those monogenized cubic fields satisfying any given set of local splitting conditions at finitely many primes, or even certain families of local splitting conditions at infinitely many primes; a similar generalization of Theorem~\ref{thm-prev} is stated in~\cite[Theorem~7]{BSHpreprint}.

\subsection{Comparison to class group heuristics}

  In their seminal paper~\cite{MR756082}, Cohen and Lenstra formulated definitive heuristics for the distribution of the class groups of quadratic fields; their heuristics were later generalized by Cohen and Martinet~\cite{MR866103} to predict the distribution of class groups of number fields of any fixed degree over a fixed base field. However, in the case of $2$-class groups of cubic number fields, the Cohen--Lenstra and Cohen--Martinet heuristics need to be adjusted to account for the fact that the base field contains second roots of unity. This observation was first made by Malle~\cite{MR2441080,MR2778658}, who made a concrete prediction for the moments of the $2$-class group distribution for cubic fields.\footnote{Cf.~the related work of Sawin--Wood~\cite{2301.00791}, which treats the case of $p$-class groups of $\Gamma$-extensions where the base field contains $p^{\mathrm{th}}$-roots of unity but $p \nmid \#\Gamma$. The Sawin--Wood heuristics are expected to generalize to the setting where $p \mid \#\Gamma$, and in the particular case where $p = 2$, $\Gamma = S_3$, and the base field is $\Q$, such generalized heuristics should agree with Malle's predictions.} In particular, the works of Malle makes the following predictions regarding the first and second moments of the $2$-class group $\on{Cl}_2(K)$ for cubic fields $K$:

\begin{conjecture}[Malle] \label{conj-theone}
Let $K$ run through all isomorphism classes of cubic fields ordered by discriminant. Then:
\begin{itemize}
    \item[\rm(a)] The average size of $\on{Cl}_2(K)$ over totally real cubic fields $K$ is equal to $5/4$, and the second moment of the size of $\on{Cl}_2(K)$ is equal to $15/8$; and
    \item[\rm(b)] The average size of $\on{Cl}_2(K)$ over complex cubic fields $K$ is equal to $3/2$, and the second moment of the size of $\on{Cl}_2(K)$ is equal to $3$.
\end{itemize}
\end{conjecture}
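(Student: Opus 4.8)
The plan is to reduce both the averages and the second moments to averages of surjection counts onto elementary abelian $2$-groups, and then to evaluate those averages by parametrizing the relevant arithmetic data as integral orbits of a coregular representation whose resolvent is a cubic ring, in the style of Bhargava's work on $2$-torsion in class groups of cubic fields. The starting point is the purely combinatorial identity valid for any finite $\F_2$-vector space $V$:
\begin{equation*}
|V| = 1 + \#\on{Sur}(V,\Z/2), \qquad |V|^2 = 1 + 3\,\#\on{Sur}(V,\Z/2) + \#\on{Sur}\big(V,(\Z/2)^2\big),
\end{equation*}
which follows from $|V|^k = \#\on{Hom}((\Z/2)^k,V)$ upon sorting homomorphisms by their image. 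Applying this with $V = \Cl_2(K)$ and averaging over cubic fields $K$ ordered by $|\disc|$ expresses every quantity in Conjecture~\ref{conj-theone} in terms of the two averages $\on{Avg}\,\#\on{Sur}(\Cl_2(K),\Z/2)$ and $\on{Avg}\,\#\on{Sur}(\Cl_2(K),(\Z/2)^2)$.

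The first of these is already a theorem: by Bhargava's determination of the mean number of $2$-torsion elements in class groups of cubic fields, one has $\on{Avg}\,|\Cl_2(K)| = 5/4$ in the totally real case and $3/2$ in the complex case, so that $\on{Avg}\,\#\on{Sur}(\Cl_2(K),\Z/2)$ equals $1/4$ and $1/2$ respectively; in particular the \emph{average} assertions of Conjecture~\ref{conj-theone}(a,b) are known. Substituting into the second identity, matching the target second moments $15/8$ and $3$ is then \emph{equivalent} to proving
\begin{equation*}
\on{Avg}\,\#\on{Sur}\big(\Cl_2(K),(\Z/2)^2\big) = \tfrac18 \ \ (\text{totally real}), \qquad \on{Avg}\,\#\on{Sur}\big(\Cl_2(K),(\Z/2)^2\big) = \tfrac12 \ \ (\text{complex}).
\end{equation*}
By class field theory, surjections onto $(\Z/2)^2$ from $\on{Cl}(K)/2\on{Cl}(K)$---an $\F_2$-space of the same dimension as $\Cl_2(K)$---correspond to $(\Z/2)^2$-extensions of $K$ unramified at all places (including the archimedean ones in the real case). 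The first moment captured single everywhere-unramified quadratic classes through quartic fields with cubic resolvent $K$; the second moment should capture \emph{pairs} of independent such classes through a larger resolvent parametrization.

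The core step is thus to construct and analyze an integral orbit parametrization for these pairs. Following the box paradigm that attaches pairs of ternary quadratic forms ($\Z^2 \otimes \Sym^2\Z^3$ under $\GL_2 \times \SL_3$, with cubic resolvent recovered from $\det(Ax-By)$) to a cubic ring together with one $2$-torsion ideal class, I would seek a coregular representation of slightly larger rank---a natural candidate being triples of ternary quadratic forms $\Z^3 \otimes \Sym^2 \Z^3$ under $\GL_3 \times \SL_3$, or a closely related space---whose integral orbits, over a cubic ring $R$ recovered as the resolvent, biject with the data of a rank-$2$ subspace of $\Cl_2(R)$, refined by the appropriate archimedean and genus data. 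One then counts orbits of bounded discriminant by integrating over a fundamental domain via Bhargava's averaging and geometry-of-numbers technique, sieves from rings to maximal rings and hence to fields while imposing the splitting conditions, and finally computes the archimedean and $p$-adic densities; the product of local masses should collapse to the predicted constants $1/8$ and $1/2$, with the discrepancy between the two signatures arising entirely from the real place.

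The main obstacle, exactly as in the present paper's own second-moment theorem, is the uniform estimation of orbits in the cusp of the fundamental domain: the pairs of independent $2$-torsion classes are governed by points of large height whose contribution must be shown to be negligible on average, and it is precisely the inability to establish these \emph{tail estimates} unconditionally that would leave the second-moment assertions of Conjecture~\ref{conj-theone} conditional, in the same manner that Theorem~\ref{thm-main} is. A secondary difficulty is pinning down the exact correspondence---including the correct $2$-power weighting factors and the interaction with the unit group---between orbits and surjections onto $(\Z/2)^2$, since one must now track a two-dimensional space of classes rather than a single class and verify that reducible or otherwise degenerate orbits do not contribute to the main term.
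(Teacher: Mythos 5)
You cannot be checked against ``the paper's own proof'' here, because the paper contains none: the statement you were given is Conjecture~\ref{conj-theone}, attributed to Malle, and it is \emph{open}. The paper only records that the first-moment assertions are theorems of Bhargava (\cite[Theorem~5]{MR2183288}), while the second-moment values $15/8$ and $3$ are precisely the predictions that remain unproven; the paper's own Theorem~\ref{thm-main} concerns a different family (monogenized cubic fields ordered by height, not all cubic fields ordered by discriminant) and is proved by entirely different means, namely by realizing $\Cl_2(K_f)$ as the Selmer group of an unramified Selmer structure on the elliptic curve $E_f$ attached to the monic form $f$ --- a device that is unavailable for the full family, since it is the monogenizer that supplies $f$ and hence $E_f$. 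To your credit, your combinatorial reduction is correct: for a finite $\F_2$-vector space $V$ one indeed has $|V|^2 = 1 + 3\,\#\on{Sur}(V,\Z/2) + \#\on{Sur}(V,(\Z/2)^2)$ (since $\#\on{Sur}(V,\Z/2)=|V|-1$ and $\#\on{Sur}(V,(\Z/2)^2)=(|V|-1)(|V|-2)$), and with Bhargava's averages $1/4$ and $1/2$ the conjecture becomes equivalent to $\on{Avg}\,\#\on{Sur}(\Cl_2(K),(\Z/2)^2)$ being $1/8$, resp.\ $1/2$. That reduction, however, is where the proof content of your proposal ends.

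The genuine gap is the core step itself: no coregular integral-orbit parametrization of \emph{pairs} of independent everywhere-unramified $2$-classes over cubic rings is known, and your proposed candidate does not work. For $\Z^3 \otimes \Sym^2\Z^3$ under $\GL_3\times\SL_3$, the natural invariant $\det(Ax+By+Cz)$ is a \emph{ternary} cubic form --- it cuts out a plane cubic curve and belongs to the circle of ideas around genus-one models, not to cubic resolvent rings --- so orbits of this space do not carry the cubic ring $R$ together with a rank-$2$ subgroup of $\Cl_2(R)$ that your argument requires; a simple dimension count ($18$-dimensional space, $17$-dimensional group) already shows its orbit structure is nothing like that of Bhargava's $\Z^2\otimes\Sym^2\Z^3$. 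Moreover, even granting such a parametrization, the assertion that ``the product of local masses should collapse to the predicted constants $1/8$ and $1/2$'' is exactly the quantitative content of the conjecture and is nowhere computed in your proposal; by contrast, the paper's second-moment machinery (Theorem~\ref{thm-countingselmers}, following \cite{BSSpreprint}) handles pairs not through a larger resolvent representation but by fixing a nontrivial class as a binary quartic form $f$ and counting quaternary quadratic forms $B$ with $\det(x\cA+yB)=f^{\mon}(x,y)$, together with explicit local mass evaluations. So your proposal is best read as a correct reduction plus a research program whose key parametrization is missing and whose named candidate representation fails; it does not constitute a proof, and indeed the statement has no proof to date.
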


The claims in Conjecture~\ref{conj-theone}(a) and (b) about the average size of $\on{Cl}_2(K)$ were proven by Bhargava in~\cite[Theorem~5]{MR2183288}. Conjectured sizes for the first moment of the narrow $2$-class groups in $S_n$-fields are given in \cite{MR3873132}, but to the best of our knowledge, conjectures for the distribution (or even the second moment) are yet to be formulated. It is proven by Bhargava and Varma in~\cite[Theorem~1]{MR3369305} that the average size of $\on{Cl}_2^+(K)$ over totally real cubic fields $K$ is equal to $2$, in agreement with the conjectures of \cite{MR3873132}.

Theorem~\ref{thm-prev} shows that imposing the condition of monogenicity causes the nontrivial part of the $2$-class group to double on average, and it was the first result of its kind to demonstrate that imposing a global condition on a family of number fields can have an altering effect on the distribution of its class groups.  Indeed, as explained in~\cite[\S1]{BSHpreprint} and~\cite[\S1.5]{swacl}, there is a considerable body of computational and theoretical evidence to suggest that the class group distributions predicted by the heuristics of Cohen--Lenstra and others remain robust under the imposition of all manner of local and global conditions.  If the claims concerning second moments in Conjecture~\ref{conj-theone} are true, then our Theorem~\ref{thm-main} shows that this altering effect is not confined to the first moment of the $2$-class group distribution.

    In light of Theorems~\ref{thm-prev} and~\ref{thm-main}, it is natural to seek to formulate an analogue of Conjecture~\ref{conj-theone} for $2$-class groups of monogenized cubic fields. This is achieved in a forthcoming paper of Siad and Venkatesh~\cite{SiadVenkatesh}, who develop a set of heuristics that predict the entire distribution of the $2$-class groups of monogenized cubic fields (and indeed, monogenized fields of any fixed degree \mbox{at least $3$).} To do this, they show that the deviation in the average size of the $2$-class groups of monogenized cubic fields is explained through the notion of a ``spin structure'' on class groups, which is an arithmetic analogue of various notions of spin structure in geometry and topology. Roughly speaking, $2$-class groups of monogenized cubic fields possess a canonical spin structure, so the ``correct'' notion of automorphisms for such class groups ought to preserve the spin structure. In this respect, $2$-class groups of monogenized cubic fields should have fewer automorphisms than general cubic fields; because the Cohen--Lenstra philosophy implies that algebraic objects should occur with frequency inversely proportional to the sizes of their automorphism groups, we should expect $2$-class groups of monogenized cubic fields to be larger on average. In particular, the work of Siad--Venkatesh predicts second moments of $3$ (resp., $6$) for $2$-class groups of totally real (resp., complex) cubic fields, predictions that agree with our Theorem~\ref{thm-main}(a) and (b).

\subsection{The idea of the proof} \label{sec-idea}

In this subsection, we explain the idea of the proof of Theorem~\ref{thm-main}(a) and (b); the remaining part concerning the narrow $2$-class group is more subtle and requires a finer analysis, but the structure of the proof is nonetheless analogous.

Computing the second moment of $\#\on{Cl}_2(K)$ amounts to determining the average size of the set $\on{Cl}_2(K)^2$ of pairs $(\alpha_1, \alpha_2)$ of $2$-torsion ideal classes of (the ring of integers of) $K$, where $K$ ranges over monogenized cubic number fields. To see why the second moment of $\#\on{Cl}_2(K)$ should be $3$ in the totally real case and $6$ in the complex case, fix a $2$-torsion class $\alpha_1$ of such a field $K$, and consider the following heuristic argument:

\vspace*{0.2cm}
\noindent \emph{Case 1}: 
Suppose $\alpha_1$ corresponds to the trivial element of $\on{Cl}_2(K)$. Then $K$ can be arbitrary, and~\cite[part (a) of Theorem~4]{BSHpreprint} implies that the average number of choices for $\alpha_2$ on average is $\frac{3}{2}$ in the totally real case and $2$ in the complex case.

\vspace*{0.2cm}
\noindent \emph{Case 2}: 
Suppose $\alpha_1$ corresponds to a non-trivial element of $\on{Cl}_2(K)$. Then~\cite[part (b) of Theorem~4]{BSHpreprint} implies that the average number of choices for $\alpha_1$ is $\frac{1}{2}$ in the totally real case and $1$ in the complex case. Moreover, the monogenized cubic field $K$ cannot be arbitrary---its class group has a marked non-trivial $2$-torsion element, namely the one corresponding to $\alpha_1$. We expect that this marked element gives an extra generator for the $2$-class group $100\%$ of the time, and so, the average number of choices for $\pi_2$ should be $2 \times \frac{3}{2} = 3$ in the totally real case and $2 \times 2 = 4$ in the complex case, twice as many choices as in case 1.

\vspace*{0.2cm}
\noindent Combining the results of cases $1$ and $2$, we conclude that the average number of choices for the pair $(\alpha_1, \alpha_2)$ should be $1 \times \frac{3}{2} + \frac{1}{2} \times 3 = 3$ in the totally real case and $1 \times 2 + 1 \times 4 = 6$ in the complex case, as desired.

In the related work~\cite{BSSpreprint}, we prove that the second moment of the size of the $2$-Selmer group of elliptic curves is at most $15$ by making rigorous an argument similar to the one detailed above. In~\cite{BSSpreprint}, the bound of $15$ arises from evaluating a certain formula involving a product of local masses over all places $v$ of $\Q$. In this paper, we show in \S\ref{sec-selstructs} that the methods of~\cite{BSSpreprint} can be extended to obtain second moments not just for the usual notion of $2$-Selmer group, but in fact for other notions of $2$-Selmer group arising from the theory of Selmer structures on elliptic curves. Then, in \S\ref{sec-unramstruct}, we realize the $2$-class group of a monogenized cubic field $K_f$ as the $2$-Selmer group corresponding to the ``unramified'' Selmer structure for the elliptic curve $y^2 = f(x)$, and we develop a similar characterization for the narrow $2$-class group. We finish by demonstrating how the local mass calculations in~\cite{BSSpreprint} can be adjusted to obtain the second moment for the $2$-Selmer groups corresponding to ordinary and narrow $2$-class groups.

%The rest of this paper is organized as follows. In \S
%In \S\ref{sec-alg}, we explain how the dual of the $2$-class group of a monogenized cubic field can be parametrized in terms of $\on{PGL}_2(\Q)$-orbits of certain binary quartic forms, and hence in terms of $2$-Selmer groups of certain elliptic curves. Thus, up to some modifications, proving Theorem~\ref{thm-main} amounts to bounding the second moment of the size of the $2$-Selmer groups of elliptic curves; this was achieved by the authors in the related work~\cite[Theorem~1]{BSSpreprint}. In \S\ref{sec-}, we explain in detail how the proof of~\cite[Theorem~1]{BSSpreprint} can be modified to give a proof of Theorem~\ref{thm-main}.

\begin{remark}
    The proof idea articulated above suggests a na\"{i}ve heuristic for class group distributions, by means of which one can determine the second moment, and indeed every higher moment, knowing only the first moment. The distribution of $p$-Selmer groups of elliptic curves conjectured by Poonen-Rains~\cite{MR2833483} obeys this heuristic for every prime $p$, as does the distribution of $2$-torsion in class groups of cubic fields predicted by Malle~\cite{MR2441080,MR2778658}. But this na\"{i}ve heuristic does not hold for most class group distributions; e.g., the moments for the distribution of $p$-torsion in class groups of quadratic fields, as predicted by Cohen--Lenstra~\cite{MR756082}, do not agree with the moments predicted by the na\"{i}ve heuristic.
\end{remark}

\section{Selmer structures for families of elliptic curves} \label{sec-selstructs}

In this section, we set the notation for elliptic curves associated to monic binary cubic forms (\S\ref{sec-ell}) and define Selmer structures on such curves (\S\ref{sec-defstructs}). Then we explain how the method of~\cite{BSSpreprint} for obtaining the second moment of the size of the $2$-Selmer groups of elliptic curves---as well as the methods of~\cite{MR3272925,MR3275847,4sel,5sel} for determining the average size of the $n$-Selmer groups of elliptic curves for $n \in \{2,3,4,5\}$---can be extended to prove analogous results (Theorem~\ref{thm-countingselmers}) for Selmer groups corresponding to Selmer structures (\S\ref{sec-results}).

\subsection{Elliptic curves associated to monic binary cubic forms} \label{sec-ell}

Consider the family of elliptic curves over $\Q$ in the short Weierstrass form $$E^{I,J} \colon y^2 = x^3 -\tfrac{I}{3}x-\tfrac{J}{27},$$
where $I,J$ are integers satisfying the following four conditions: $3^4 \mid I$, $3^6 \mid J$, $3^9 \nmid I$ if $3^{15} \mid J$, and $p^4 \nmid I$ if $p^6 \mid J$ for every prime $p > 3$. These four conditions are imposed to ensure that each elliptic curve over $\Q$ admits a unique model of the form $E^{I,J}$. For such models, the \textbf{discriminant} is given explicitly by
$$\Delta\big(E^{I,J}\big) \defeq \Delta(I,J) \defeq (4I^3 - J^2)/27.$$
Given a maximal monic binary cubic form $f \in \Z[x,y]$ with $x^2y$-coefficient equal to $a$, we define the {\bf elliptic curve associated to $f$} by
$$E_f \defeq E^{3^4I(f),3^6J(f)}, 
%\quad\text{where}\quad \upkappa_a = \begin{cases} 1, & \text{ if $a \equiv 0 \pmod 3$,} \\
%9, & \text{ otherwise,} \end{cases}
$$
where $I(f)$ and $J(f)$ are defined as in \S\ref{sec-height}. Note that the curve $E_f$ is independent of the choice of representative of the equivalence class of $f$. Moreover, observe that the binary cubic form $x^3 - I(f)x/3 - J(f)/27$ is equivalent to $f$ over $\Q$; the extra factors of $3^4$ and $3^6$ are included in the definition of $E_f$ to ensure that the equation for $E_f$ has integral coefficients.

\subsection{Defining Selmer structures} \label{sec-defstructs}

Let $E$ be an elliptic curve over a field $F$. For any positive integer $n$, we have the exact sequence
\begin{equation*}
0\to E[n]\to E\to E \to 0
\end{equation*}
of $G_F$-modules, where $G_F$ denotes the absolute Galois group of $F$. This yields the long exact sequence
\begin{equation*}
0\to E(F)/nE(F)\to H^1(F,E[n])\to H^1(F,E)[n]\to 0.
\end{equation*}
The map from $E(F)$ to $H^1(F,E[n])$ is called the {\bf Kummer map}.
We will use the above exact sequence in the cases $F=\Q$ and $F=\Q_v$ for places $v$ of $\Q$, and we denote the Kummer map in these cases by $\delta$ and $\delta_v$, respectively. 

For every place $v$ of $\Q$, we have a natural injection $G_{\Q_v}\hookrightarrow G_\Q$. This yields a group homomorphism $H^1(\Q,T)\to H^1(\Q_v,T)$ for any $G_\Q$-module $T$. Let $E$ be an elliptic curve over $\Q$, and let $n$ be a positive integer. The $n$-Selmer group is then defined to be the set of classes in $H^1(\Q,E[n])$ that are in the image of the Kummer map everywhere locally:
\begin{equation}\label{eqnsel}
\Sel_n(E)\defeq \ker\Bigl(H^1({\Q},E[n])\to\prod_v H^1({\Q_v},E[n])/\Im(\delta_v)\Bigr).
\end{equation}
More generally, we may impose different local conditions at each place $v$. An {\bf $n$-Selmer structure} $\cS(E)$ for an elliptic curve $E$ over $\Q$ is a collection $\cS(E) = (\cS_v(E))_v$ of \emph{subgroups} $\cS_v(E) \subset H^1(\Q_v,E[n])$, such that $\cS_v(E)$ is the image of the Kummer map $\delta_v$ for all but finitely many places $v$ (cf.~\cite{MR2031496} and~\cite{MR1638477} for more on Selmer structures). We then define the Selmer group associated to $\cS \defeq \cS(E)$ via
\begin{equation}\label{eqFsel}
\Sel_{\cS}(E)\defeq \ker\Bigl(H^1({\Q},E[n])\to\prod_v H^1({\Q_v},E[n])/\cS_v(E)\Bigr).
\end{equation}
Since $\Sel_n(E)$ is a finite group, it follows that $\Sel_{\cS}(E)$ is also finite.
\medskip

For every prime $p$, let $\Lambda_p\subset\Z_p^2\smallsetminus\{\Delta=0\}$ be a closed subset whose boundary has measure $0$. We associate the set $\EE=\EE_\Lambda$ of elliptic curves to the collection $\Lambda=(\Lambda_p)_p$, where $E^{I,J}\in\EE$ if and only if $(I,J) \in\Lambda_p$ for all primes $p$; here, we restrict our choice of the sets $\Lambda_p$ so that the corresponding pairs $(I,J)$ satisfy the four divisibility conditions described in \S\ref{sec-ell}. Following \cite[\S3]{MR3272925}, we say that $\EE$ is {\bf defined by congruence conditions}. We may also impose ``conditions at infinity'' on $\EE$ by fixing a compact subset $\Lambda_\infty\subset \{(I,J):\Delta(I,J)\neq 0\}$ whose boundary has measure $0$, and saying that $E^{I,J} \in \EE$ if $(t^2I,t^3J)$ belongs to $\Lambda_\infty$ for some $t > 0$. In practice, we will take $\Lambda_\infty$ to be the set of elements of height at most $1$ in one of the following three sets: $\{(I,J):\Delta(I,J)>0\}$, $\{(I,J):\Delta(I,J)<0\}$, or $\{(I,J):\Delta(I,J)\neq 0\}$. The collection $\EE_\Lambda$ is said to be a {\bf large family} if for all but finitely many primes $p$, the set $\Lambda_p$ contains every pair $(I,J)$ such that $p^2\nmid\Delta(I,J)$.

Our next goal is to define $n$-Selmer structures uniformly for elliptic curves in large families. For an elliptic curve $E$ over $\Q$, we think of a subgroup $\cS_v\subset H^1(\Q_v,E[n])$ as being a {\bf local condition} for $v$. We then define the following five local conditions:
\begin{itemize}
\item[{\rm (a)}] The {\bf relaxed local condition} at $v$ corresponds to $\cS_v=H^1(\Q_v,E[n])$.
\item[{\rm (b)}] The {\bf soluble local condition} at $v$ corresponds to $\cS_v=\Im(\delta)$.
\item[{\rm (c)}] The {\bf unramified local condition} at $v$ corresponds to $\cS_v=H^1_\ur(\Q_v,E[n])$, the subgroup of {\bf unramified cohomology classes} defined by $$H^1_\ur(\Q_v,E[n])\defeq \ker\big(H^1(\Q_v,E[n])\to H^1(I_v,E[n](\ol{\Q_v}))\big),$$ where $I_v\subset G_{\Q_v}$ is the inertia subgroup.
\item[{\rm (d)}] The {\bf soluble and unramified local condition} at $v$ corresponds to $\cS_v=\on{im}(\delta_v)\cap H^1_\ur(\Q_v,E[n])$.
\item[{\rm (e)}] The {\bf strict local condition} at $v$ corresponds to $\cS_v=0$.
\end{itemize}
An {\bf $n$-Selmer structure $\cS$ for a family of elliptic curves $\EE$} gives, for each place $v$ of $\Q$, one of the above five local conditions, namely relaxed, soluble, unramified, soluble and unramified, or strict. It is then clear that for any elliptic curve $E$ over $\Q$, we obtain an $n$-Selmer structure also denoted $\cS(E)=(\cS_v(E))_v$ by taking $\cS_v(E)$ to be the corresponding subgroup of $H^1(\Q_v,E[n])$ described above. Such a structure $\cS$ for $\EE$ is said to be {\bf sub-soluble} if, for every place $v$ and every elliptic curve $E\in\EE$, the given local condition $\cS_v(E)$ is a subset of the soluble local condition on $E$. That it, $\cS_v(E)$ must be contained within the image $\on{im}(\delta_v)$ of the Kummer map in $H^1(\Q_v,E[n])$.
We say that $\cS$ is {\bf acceptable} if for all but finitely many places $v$, $\cS_v$ is the soluble local condition or the unramified local condition.

\subsection{Results on moments of Selmer groups corresponding to Selmer structures} \label{sec-results}

In this section, we state and prove our main result, Theorem~\ref{thm-countingselmers}, for obtaining moments of Selmer groups associated to Selmer structures on families of elliptic curves. The moments are expressed in terms of local masses, which we now introduce. For a place $v$ of $\Q$, define the {\bf local mass} at $v$ associated to an $n$-Selmer structure $\cS$ for a large family $\EE=\EE_\Lambda$ of elliptic curves over $\Q$ as follows:
\begin{equation}\label{eqMass}
\cM_v(n;\EE;\cS)\defeq
\displaystyle\frac{\displaystyle\int_{(I,J)\in\Lambda_v}\displaystyle\frac{\#\cS_v(E^{I,J})}{\#E^{I,J}(\Q_v)[n]}dIdJ}
{\displaystyle\int_{(I,J)\in\Lambda_v}dIdJ},
\end{equation}
where $dIdJ$ denotes the Haar measure on $\Z_v^2$, normalized for each finite place $v = p$ so that $\on{Vol}(\Z_p^2) = 1$ and for $v = \infty$ so that $\on{Vol}(\Z^2 \backslash \R^2) = 1$. Then define the total mass as the product of local masses:
\begin{equation*}
\cM(n;\EE;\cS) \defeq \prod_v \cM_v(n;\EE;\cS).
\end{equation*}
For a real number $X>0$, let $\EE(X)\defeq \{E^{I,J}\in\EE : (X^{-1/3}I,X^{-1/2}J)\in\Lambda_\infty\}$.
We define the $k^{\mathrm{th}}$ moment of the size of $\Sel_{\cS(E)}(E)$ to be
\begin{equation*}
\Avg^{(k)}(n;\EE;\cS) \defeq \lim_{X\to\infty}
\frac{\displaystyle\sum_{E\in\EE(X)}
\#\Sel_{\cS(E)}(E)^k}{\#\EE(X)},
\end{equation*}
if the limit exists. When $k=1$, we suppress the superscript in $\Avg^{(k)}$.

Suppose $d$ is a divisor of $n$. If $\cS$ is an $n$-Selmer structure on a family of elliptic curves, then $\cS$ naturally induces a $d$-Selmer structure, denoted $\cS^{(d)}$, on this family as well. With this notation, the subgroup of $\Sel_{\cS(E)}(E)$ consisting of elements having order dividing $d$ is isomorphic to $\Sel_{\cS^{(d)}(E)}(E)$. Then we have the following result.
\begin{theorem} \label{thm-countingselmers}
Let $\EE=\EE_\Lambda$ be a large family of elliptic curves, let $n \in \{2,3,4,5\}$, and let $\cS$ be an acceptable and sub-soluble $n$-Selmer structure for $\EE$. Then we have
\begin{equation*}
\begin{array}{rcl}    
\Avg(n;\EE;\cS)&=&\displaystyle 1+n\cM(n;\EE;\cS),
\\[.1in]
\Avg(4;\EE;\cS)&=&\displaystyle 1+2\cM(2;\EE;\cS^{(2)})+4\cM(4;\EE;\cS) \quad \text{when $n = 4$},
\\[.1in]
\Avg^{(2)}(2;\EE;\cS)&\leq&\displaystyle 1+6\cM(2;\EE;\cS)+8\cM(2;\EE;\cS)^2 \quad\,\,\,\, \text{when $n = 2$},
\end{array}
\end{equation*}
where in the last line of the display, we have equality conditional on a tail estimate.
\end{theorem}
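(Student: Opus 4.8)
The plan is to reduce every assertion to an orbit-counting problem solved by the geometry of numbers, following the paradigm of \cite{MR3272925,MR3275847,4sel,5sel} for first moments and its refinement in \cite{BSSpreprint} for the second moment, and then to check that imposing an acceptable, sub-soluble Selmer structure only replaces the usual local densities by the masses $\cM_v(n;\EE;\cS)$. First I would recall, for each $n\in\{2,3,4,5\}$, the genus-one-model parametrization realizing the non-identity soluble classes in $H^1(\Q,E[n])$ as orbits of an arithmetic group $G_n(\Z)$ on an integral representation $V_n(\Z)$ (binary quartics for $n=2$, ternary cubics for $n=3$, pairs of quaternary quadrics for $n=4$, genus-one models of degree $5$ for $n=5$), with the pair $(I,J)$ recovered from the invariants. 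The hypotheses on $\cS$ are exactly what make this reduction legitimate: sub-solubility guarantees that every class counted by $\Sel_\cS(E)$ is soluble at each place and hence is represented by such an orbit, while acceptability guarantees that $\cS_v(E)$ agrees with the soluble or unramified condition for all but finitely many $v$, so that the orbits to be counted are cut out by congruence conditions at finitely many primes together with the standard condition elsewhere.

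Next I would run the established geometry-of-numbers count on this restricted family of orbits. The main term factors as an archimedean volume times a product of $p$-adic densities, and the sole effect of the local conditions is to weight the density at $v$ by the number of admissible local classes $\#\cS_v(E)$, normalized by $\#E(\Q_v)[n]$; upon averaging over $\EE$ each factor becomes $\cM_v(n;\EE;\cS)$ and the product becomes $\cM(n;\EE;\cS)$. The identity class always lies in $\cS_v(E)$ and contributes the summand $1$, while the non-identity classes contribute $n\,\cM(n;\EE;\cS)$, the explicit factor $n$ arising from the global orbit count (equivalently, from the product-formula contribution of the archimedean place) exactly as in the unconditioned setting; this gives the first identity. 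For $n=4$ I would instead split the count according to order: classes of order dividing $2$ are enumerated by $\Sel_{\cS^{(2)}(E)}(E)$ and contribute $2\,\cM(2;\EE;\cS^{(2)})$, whereas classes of exact order $4$ contribute $4\,\cM(4;\EE;\cS)$, yielding the second display.

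For the second moment I would decompose the pairs $(\sigma_1,\sigma_2)\in\Sel_\cS(E)^2$ by the rank of the subgroup $\langle\sigma_1,\sigma_2\rangle$. The pair $(0,0)$ contributes $1$; the rank-$1$ pairs come in three ordered configurations $(0,\sigma),(\sigma,0),(\sigma,\sigma)$ per nonzero class $\sigma$, so by the first-moment identity (which gives $2\,\cM(2;\EE;\cS)$ nonzero classes on average) they contribute $6\,\cM(2;\EE;\cS)$. The main obstacle is the remaining case of \emph{linearly independent} pairs spanning a rank-$2$ subgroup, for which I would invoke the second-moment machinery of \cite{BSSpreprint}, counting orbits that simultaneously encode two independent classes; at each place the two independent local conditions multiply to produce the factor $\cM(2;\EE;\cS)^2$ with combinatorial coefficient $8$. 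This is precisely where only an inequality is available in general: the count over the main body of the fundamental domain yields the main term, but the contribution of the cuspidal region can only be bounded from above, so one obtains the bound $1+6\,\cM(2;\EE;\cS)+8\,\cM(2;\EE;\cS)^2$ unconditionally, with equality once the tail estimate shows the cuspidal contribution to be negligible.
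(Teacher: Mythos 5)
Your proposal follows essentially the same route as the paper: reduce each assertion to counting irreducible integral orbits via the genus-one-model parametrizations, rerun the geometry-of-numbers argument with the local densities replaced by the masses $\cM_v(n;\EE;\cS)$ (sub-solubility making every class representable by a locally soluble orbit, acceptability making the tail estimates of the prior works applicable), and for the second moment use the same decomposition of pairs --- your rank-$0/1/2$ split is identical to the paper's split into ($\sigma_1$ trivial), ($\sigma_1$ nontrivial, $\sigma_2$ one of the two reducible-orbit classes), and (both nontrivial), with the rank-$2$ contribution $8\,\cM(2;\EE;\cS)^2$ supplied by the machinery of \cite{BSSpreprint}. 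One small correction: the unconditional statement is an upper bound not because the cuspidal region can only be bounded from above (cusp contributions are controlled unconditionally in these methods), but because imposing infinitely many congruence conditions only yields an upper bound until a uniformity estimate at large primes is available --- that uniformity estimate is the ``tail estimate'' on which equality is conditional.
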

\begin{remark}
    Although our method of proof yields an upper bound for $\Avg^{(2)}(2;\EE;\cS)$, it is sufficiently robust to obtain the following generalization: if $\cS$ and $\wt{\cS}$ are sub-soluble $2$-Selmer structures for $\EE$ such that $\cS$ is a sub-structure of $\widetilde{\cS}$, then we have
    \begin{equation} \label{eq-slightgen}
        \Avg^{(2)}(2;\EE;\widetilde{\cS}) - \Avg^{(2)}(2;\EE;\cS) \leq 6\big(\cM(2;\EE;\widetilde{\cS}) - \cM(2;\EE;\cS)\big)+8\big(\cM(2;\EE;\widetilde{\cS})^2 - \cM(2;\EE;\cS)^2\big)
    \end{equation}
    Such generalizations are of course immediate for the other moments considered in Theorem~\ref{thm-countingselmers}.
\end{remark}
\begin{proof}
For $n \in \{2,3,4,5\}$, we claim that the average number of elements in $\Sel_{\cS(E)}(E)$ that have exact order $n$, is equal to $n\cM(n;\EE;\cS)$. 
To prove this claim, we follow the setup and methods from \cite{MR3272925,MR3275847,4sel,5sel}. 
We consider semisimple algebraic groups $G_n$ and coregular representations $V_n$ of $G_n$, such that the rings of invariants for the actions of $G_n$ on $V_n$ are freely generated by two elements, denoted $I$ and $J$. 
The first step is to apply results of Cremona--Fisher--Stoll \cite{MR2728489} and Fisher \cite{MR3101076} to obtain, for an elliptic curve $E^{I,J}$ over $\Q$, a bijection
\begin{equation*}
\{\sigma\in\Sel_n(E^{I,J}):\ord(\sigma)=n\}\to G_n(\Q)\backslash V_n(\Z)^{(I,J),\irr,\ls},
\end{equation*}
where the right-hand side is the set of $G_n(\Q)$-equivalence classes on the set of elements $v\in V_n(\Z)$ with invariants\footnote{When $n=2$, in \cite{MR3272925}, we actually choose the invariants to be $2^4I$ and $2^6J$. However, by simply rescaling those polynomial invariants, we can cover $n\in\{2,3,4,5\}$ in a uniform narrative.} $I(v)=I$ and $J(v)=J$
that are {\bf irreducible} and {\bf everywhere locally soluble}. The condition of irreducibility is a global one, ensuring that $w\in V_n(\Z)$ corresponds to an element in $\sigma(w)\in H^1(\Q,E^{I,J}[n])$ which has exact order $n$. The condition of being everywhere locally soluble is a local one, which ensures that for all places $v$ of $\Q$, the image of $\sigma(w)$ in $H^1({\Q_v},E^{I,J}[n])$ lies in the image of the Kummer map. For our situation, we note that since $\cS$ is assumed to be sub-soluble, we have $\Sel_{\cS(E)}(E)\subset\Sel_n(E)$ for all elliptic curves $E$. Therefore, we obtain the bijection
\begin{equation*}
\{\sigma\in\Sel_{\cS(E)}(E):\ord(\sigma)=n\}\to G_n(\Q)\backslash V_n(\Z)^{(I,J),\irr,\cS},
\end{equation*}
where $E=E^{I,J}$, and where the superscript $\cS$ is used to prescribe the condition that $w\in V_n^{I,J}(\Z)$ is such that the image of $\sigma(w)$ in 
$H^1({\Q_v},E[n])$ belongs to $\cS_v(E)$ for all places $v$ of $\Q$.

For a real number $X>0$, let $X.\Lambda_\infty \defeq \{(I,J)\in\R^2 : (X^{-1/3}I,X^{-1/2}J)\in\Lambda_\infty\}$.
The second step in \cite{MR3272925,MR3275847,4sel,5sel} is to apply geometry-of-numbers methods to prove that we have the asymptotic
\begin{equation*}
\#\{w\in G_n(\Z)\backslash V_n(\Z)^\irr:(I(w),J(w))\in X.\Lambda_\infty\}\sim
|\J_n|\cM_\infty(n;\EE,\cS)\Vol(G_n(\Z)\backslash G_n(\R))\Vol(X.\Lambda_\infty),
\end{equation*}
where $\J_n$ is some nonzero rational constant.
This is proven in \cite{MR3272925,MR3275847,4sel,5sel} when $\Lambda_\infty$ consists of the set of elements in $\R^2$ with height less than $1$, but the case for general $\Lambda_\infty$ follows identically.

The final step is to impose local conditions (at all primes $p$) on the points in $V_n(\Z)$ being counted such that: (a) the count of $G_n(\Z)$-orbits is replaced with the count of $G_n(\Q)$-equivalence classes; and (b) $w\in V_n(\Z)$ is only counted when $w\in V_n(\Z)^{\cS}$. That asymptotics can be obtained even when infinitely many local conditions are imposed on $V_n(\Z)$ requires an input of a tail estimate. Such tail estimates are proved in \cite{MR3272925,MR3275847,4sel,5sel} and apply identically in our situation since we assume that $\cS$ is acceptable. With this input, we obtain the asymptotic
\begin{equation*}
\frac{\#\{w\in G_n(\Z)\backslash V_n(\Z)^{\irr,\cS}:(I(w),J(w))\in X.\Lambda_\infty\}}{\Vol(X.\Lambda_\infty)}\sim
|\J_n|\cM_\infty(n;\EE,\cS)\Vol(G_n(\Z)\backslash G_n(\R))\prod_p\nu_p,
\end{equation*}
where $\nu_p$ is the $p$-adic density of the local condition imposed at $p$. When $\cS_p$ is the soluble condition, $\nu_p$ is computed in \cite{MR3272925,MR3275847,4sel,5sel} to be
\begin{equation*}
\nu_p=|\J_n|_p\Vol(G_n(\Z_p))\int_{(I,J)\in\Lambda_v}\displaystyle\frac{\#E^{I,J}(\Q_p)/2E^{I,J}(\Q_p)}{\#E^{I,J}(\Q_p)[n]}dIdJ,
\end{equation*}
where $\#E^{I,J}(\Q_p)/2E^{I,J}(\Q_p)$ is used to parametrize the image of the Kummer map. In our situation, an identical computation yields
\begin{equation*}
\nu_p=|\J_n|_p\int_{(I,J)\in\Lambda_v}\displaystyle\frac{\#\cS_v(E^{I,J})}{\#E^{I,J}(\Q_p)[n]}dIdJ=
|\J_n|_p\Vol(G_n(\Z_p))\cM_{p}(n;\EE;\cS)\Vol(\Lambda_p).
\end{equation*}
The number of elements $E^{I,J}\in\EE$ with $(I,J)\in X.\Lambda_\infty$ is asymptotic to $\Vol(X.\Lambda_\infty)\prod_p\Vol(\Lambda_p)$.
Putting everything together, we see that the average number of elements in $\Sel_{\cS(E)}(E)$ having exact order $n$, over $E\in\EE$, is equal to
\begin{equation*}
\lim_{X \to \infty} \frac{\#\{w\in G_n(\Z)\backslash V_n(\Z)^{\irr,\cS}:(I(w),J(w))\in X.\Lambda_\infty\}}{\Vol(X.\Lambda_\infty)\prod_p\Vol(\Lambda_p)}
= \tau(G_n)\cM(n;\EE;\cS),
\end{equation*}
where $\tau(G_n)=\Vol(G_n(\Z)\backslash G_n(\R))\prod_p\Vol(G_n(\Z_p))$ is the Tamagawa number of $G_n$. Since this Tamagawa number is $n$, the first two claims of the theorem follow.

\medskip

To prove the third claim, we follow the setup and methods of \cite{BSSpreprint}, where the strategy sketched in \S\ref{sec-idea} is made rigorous with the $2$-class groups of monogenized cubic fields replaced by the $2$-Selmer groups of elliptic curves. In this situation, we consider a pair $(\sigma_1,\sigma_2)\in\Sel_{\cS(E)}(E)^2\subset\Sel_{2}(E)^2$, where $E=E^{I,J}$ is an elliptic curve over $\Q$. If $\sigma_1$ is trivial, then \mbox{$\#(\sigma_1\times\Sel_{\cS(E)}(E))=\#\Sel_{\cS(E)}(E)$,} and the average value of this size over $E\in\EE$ is $1+2\cM(2;\EE;\cS)$.

Now suppose that $\sigma_1$ is not the identity element. This time, we map $(\sigma_1,\sigma_2)$ to the pair $(f,\sigma_2)$, where $f\in V_2(\Z)^{(I,J),\irr,\cS}$, and $\sigma_2\in\Sel_{\cS(E)}(E)$. Here, $f$ is a binary quartic form with invariants $I$ and $J$, having no linear factor over $\Q$, and satisfying certain congruence conditions imposed by $\cS$. In particular, these congruence conditions imply that $f$ is locally soluble (since $\cS$ was assumed to be sub-soluble). Moreover, $E=\Jac_f$ is determined by $f$ as being the Jacobian of the genus-$1$ curve $C_f$ cut out by the equation $z^2=f(x,y)$ in weighted projective space. Applying parametrization results of \cite{swacl}, it is proved in \cite{BSSpreprint} that nontrivial elements in $\Sel_2(C_f)$ maps injectively into certain {\bf special} and {\bf irreducible} $\PSO_{\cA}(\Z)$-orbits on $B\in W_4(\Z)$, the space of integral quaternary quadratic forms, such that $\det(x\cA+yB)=f^\mon(x,y)$. Here $\cA$ is the split quaternary quadratic form of determinant $1$, and $f^\mon(x,y)$ denotes the {\bf monicization} of $f(x,y)$. Furthermore, for each $f$ arising from a nontrivial $2$-Selmer element of $E$, there are two ``trivial'' elements of $\Sel_2(C_f)$ whose corresponding integral orbit is not irreducible.
The definitions of being special, irreducible, and monicizations are not necessary for us. Suffice it to say that in \cite{BSSpreprint}, we determine asymptotics for the number of special and irreducible orbits $B$, satisfying specified local conditions, such that $\det(x\cA +yB)$ is the monicization of a binary quartic form having bounded invariants.

Thus, as before, the only difference between our case and \cite{BSSpreprint} is that the local conditions imposed by $\cS$ differ from those imposed by local solubility, and this will have the effect of changing the local masses. In \cite{BSSpreprint}, the average value of $\#\Sel_2(C_f)-2$ (the $-2$ is the contribution from the trivial elements) over binary quartic forms $f$ corresponding to nontrivial elements in $\Sel_2(E)$, over $E\in\EE$ is computed to be less than or equal to
\begin{equation*}
    \tau(\PGL_2)\tau(\PSO_\cA)\left(\prod_v
    \int_{(I,J)\in\Lambda_v}\displaystyle\frac{\#E^{I,J}(\Q_p)/2E^{I,J}(\Q_p)}{\#E^{I,J}(\Q_p)[n]}dIdJ
    \right)^2,
\end{equation*}
where once again, $E^{I,J}(\Q_p)/2E^{I,J}(\Q_p)$ is used to parametrize the image of the Kummer map, and the inequality is in fact an equality conditional on a tail estimate. In our situation, with $\Sel_2$ replaced by $\Sel_\cS$, we obtain an average of $8\cM(2;\EE;\cS)^2$.

Combining all of the different contributions to $\Avg^{(2)}(2;\EE;\cS)$, we obtain a summand of $1+2\cM(2;\EE;\cS)$ from pairs $(\sigma_1,\sigma_2)\in\Sel_{\cS(E)}(E)$ with $\sigma_1$ being trivial, a summand of $4\cM(2;\EE;\cS)$ from pairs $(\sigma_1,\sigma_2)$ where $\sigma_1$ (corresponding to $f$) is nontrivial and $\sigma_2$ is one of the two trivial elements in $\Sel_{\cS(\on{Jac}_f)}(\on{Jac}_f)$, and a summand of
$8\cM(2;\EE;\cS)^2$ from pairs $(\sigma_1,\sigma_2)$ where both $\sigma_1$ and $\sigma_2$ are nontrivial. Adding them up, we obtain the result.
\end{proof}

\section{A Selmer structure for $2$-torsion in class groups of monogenized cubic fields} \label{sec-unramstruct}

In this section, we first demonstrate that the $2$-class group of a monogenized cubic field can be interpreted as the Selmer group associated to the unramified Selmer structure on the corresponding elliptic curve (\S\ref{sec-ordinary}). Then we explain how the narrow $2$-class group can be captured using a combination of two different Selmer structures in conjunction with the principle of inclusion-exclusion (\S\ref{sec-narrowgen}). This puts us in a position to apply Theorem~\ref{thm-countingselmers} to obtain the desired second moments, which we do for the ordinary $2$-class group in \S\ref{sec-proofab} and for the narrow $2$-class group in \S\ref{sec-proofc}.

\subsection{The case of the ordinary $2$-class group} \label{sec-ordinary}

We begin with the following well known lemma regarding the Galois cohomology of the $2$-torsion subgroup $E[2]$ for elliptic curves $E$.

\begin{lemma}\label{lemEC2torGC}
Let $F$ be a field of characteristic zero, let $f \in F[x,y]$ is a $($separable$)$ monic binary cubic form, let $E = E_f$, and let $K = K_f$ denote the \'etale algebra $F[x]/(f(x,1))$. Then we have an exact sequence
\begin{equation}\label{eqses}
1\to E[2]\to \Res_{K/F}\mu_2 \to \mu_2\to 1
\end{equation}
 of $G_F$-modules. Moreover we have the isomorphism
\begin{equation*}
H^1(F,E[2])\simeq (K^\times/K^{\times2})_{\on{N}\equiv1},
\end{equation*}
where the right-hand side consists of elements in $K^\times/K^{\times2}$ having square norm in $F^\times$.
Finally, if $H$ is a subgroup of $G_F$, then an element in $H^1(F,E[2])$, corresponding to the class of $\alpha\in K^\times$, maps to the identity in $H^1(H,E(F^{\on{sep}})[2])$ if and only if $H$ acts trivially on $\sqrt{\alpha}$.
\end{lemma}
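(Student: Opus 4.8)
The plan is to prove the three claims in order, using the long exact cohomology sequence of \eqref{eqses} as the main engine. To construct \eqref{eqses} itself, I would first use separability of $f$ to write $E[2]=\{O,P_1,P_2,P_3\}$ with $P_i=(e_i,0)$, where $e_1,e_2,e_3\in F^{\on{sep}}$ are the roots of $f(x,1)$ and $G_F$ acts on $E[2]$ through its permutation action on the $e_i$. Under the identification $K\otimes_F F^{\on{sep}}\cong\prod_i F^{\on{sep}}$ indexed by the roots, $\Res_{K/F}\mu_2$ becomes $\mu_2^3$ with $G_F$ permuting the factors accordingly, and the map $\Res_{K/F}\mu_2\to\mu_2$ is the product (norm) map, whose kernel is the order-$4$ ``norm-one'' subgroup. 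I would then define $E[2]\to\Res_{K/F}\mu_2$ by $P_i\mapsto v_i$, where $v_i$ has $+1$ in coordinate $i$ and $-1$ elsewhere (equivalently $v_i=(\langle P_i,P_j\rangle)_j$ for the Weil pairing $\langle\,,\rangle$). Since $P_1+P_2+P_3=O$ matches $v_1v_2v_3=1$, this is a $G_F$-equivariant isomorphism onto the norm-one subgroup, yielding \eqref{eqses}.

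For the middle isomorphism, I would take the long exact sequence of \eqref{eqses}, apply Shapiro's lemma to get $H^i(F,\Res_{K/F}\mu_2)\cong H^i(K,\mu_2)$, and invoke Kummer theory to identify $H^1(K,\mu_2)\cong K^\times/K^{\times2}$ and $H^1(F,\mu_2)\cong F^\times/F^{\times2}$. Under these identifications the map induced by the product map $\Res_{K/F}\mu_2\to\mu_2$ is the corestriction, namely the norm $\Nm_{K/F}\colon K^\times/K^{\times2}\to F^\times/F^{\times2}$. It then remains to check that the connecting map $H^0(F,\mu_2)\to H^1(F,E[2])$ vanishes, equivalently that $\Nm_{K/F}\colon\mu_2(K)\to\mu_2(F)$ is surjective; this holds because $[K:F]=3$ is odd, so $\Nm_{K/F}(-1)=(-1)^3=-1$. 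Hence $H^1(F,E[2])\hookrightarrow K^\times/K^{\times2}$ with image $\ker(\Nm_{K/F})$, which is exactly the subgroup $(K^\times/K^{\times2})_{\on{N}\equiv1}$ of classes of square norm.

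For the last claim, the same parity input makes restriction to $H$ behave well. I would first show that $H^1(H,E[2])\to H^1(H,\Res_{K/F}\mu_2)$ is injective: its kernel is the image of the connecting map out of $H^0(H,\mu_2)$, which vanishes because $H^0(H,\Res_{K/F}\mu_2)\to H^0(H,\mu_2)$ is surjective---among the three roots there is always an $H$-orbit of odd size, so some $H$-invariant sign vector has product $-1$. Thus $\on{res}_H\xi=0$ if and only if the image of $\xi$ in $H^1(H,\Res_{K/F}\mu_2)$ is trivial. Because restriction is compatible with the Shapiro and Kummer identifications, this image is the class of $\alpha$, and a Mackey-decomposition computation identifies its vanishing with $\alpha$ being a square in $K\otimes_F(F^{\on{sep}})^H$, that is, with $\sqrt\alpha$ lying in $(K\otimes_F F^{\on{sep}})^H$---precisely the condition that $H$ fixes $\sqrt\alpha$.

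The first two claims are routine once one observes that the odd degree $[K:F]=3$ forces the relevant norm and augmentation maps on $H^0$ to be surjective. The main obstacle is the last claim: one must transport the Kummer class of $\alpha$ through Shapiro's lemma in a way that is compatible with restriction to $H$, and then interpret ``$H$ acts trivially on $\sqrt\alpha$'' correctly in terms of the $H$-fixed points of $K\otimes_F F^{\on{sep}}$. The choice of square root is ambiguous up to the sign vectors in $\mu_2^3$, but this ambiguity is exactly the coboundary freedom in $H^1(H,E[2])$, so the stated condition is well-posed; verifying this matching is the one genuinely delicate point.
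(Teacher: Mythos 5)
Your proposal is correct and follows essentially the same route as the paper's proof: the explicit identification of $E[2]$ with the norm-one sign vectors in $\Res_{K/F}\mu_2$ via the roots of $f$, the long exact sequence combined with Kummer theory for the middle isomorphism, and the explicit Kummer cocycle $\gamma \mapsto \gamma(\sqrt{\alpha})/\sqrt{\alpha}$ for the final claim. If anything, you are more careful than the paper, which leaves implicit the vanishing of the connecting maps out of $H^0(\cdot,\mu_2)$ (your odd-degree norm argument) and the matching of coboundary freedom with the sign ambiguity of $\sqrt{\alpha}$.
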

\begin{proof}
Over $\overline{F}$, the algebraic closure of $F$, the polynomial $f(x)$ has three distinct roots, say $\alpha_1$, $\alpha_2$, and $\alpha_3$. On the level of $\overline{F}$-points, we have $E(\overline{F})[2]=\{\infty,(\alpha_1,0),(\alpha_2,0),(\alpha_3,0)\}$, with the $G_F$ action that fixes $\infty$ and acts on the other three points in the same way as $G_F$ acts on the triple $(\alpha_1,\alpha_2,\alpha_3)$. Furthermore, the $\overline{F}$-points of $\Res_{K/F}\mu_2$ can be written as triples $(\pm1,\pm1,\pm1)$, where the $i^{\mathrm{th}}$ coordinate corresponds to $\alpha_i$ and the $G_F$ acts on $\Res_{K/F}\mu_2$ by correspondingly permuting the coordinates. The map $\Res_{K/F}\mu_2 \to \mu_2$ is simply the norm map given by $(\theta_1,\theta_2,\theta_3)\mapsto (\theta_1\theta_2\theta_3)$.
The map $E[2]\to \Res_{K/F}\mu_2$ given by
\begin{equation*}
\infty\mapsto (1,1,1),\;\;(\alpha_1,0)\mapsto (1,-1,-1),\;\;(\alpha_2,0)\mapsto (-1,1,-1),\;\;(\alpha_3,0)\mapsto (-1,-1,1),
\end{equation*}
is an injective homomorphism respecting the action of $G_F$. Moreover, its image is exactly kernel of the norm map. This concludes the proof of the first assertion.

To see that the second assertion is true, we take the long exact sequence corresponding to \eqref{eqses} and obtain the exact sequence
\begin{equation*}
    1\to H^1(F,E[2])\to H^1(F,\Res_{K/F}\mu_2)\to H^1(F,\mu_2).
\end{equation*}
Kummer theory asserts that we have
\begin{equation*}
H^1(F,\Res_{K/F}\mu_2)\simeq K^\times/K^{\times2},\quad 
H^1(F,\mu_2)\simeq F^\times/F^{\times2},
\end{equation*}
concluding the proof of the second assertion of the lemma.

Making the Kummer map explicit, we see that an element $\alpha\in K^\times/K^{\times2}$ corresponds to the following cocycle $\sigma$ in $H^1(F,\Res_{K/F}\mu_2)$:
\begin{equation*}
\sigma\colon \gamma\mapsto \gamma(\beta)/\beta,
\end{equation*}
where $\beta\in K\otimes_F\overline{F}$ satisfies $\beta^2=\alpha$. It follows that $\sigma$ maps to the identity in $H^1(H,\Res_{K/F}\mu_2)$ if and only if $h(\beta)=\beta$ for all $\beta\in H$, or equivalently, if $H$ acts trivially on $\beta$.
\end{proof}

For an elliptic curve $E$ and a place $v$ of $\Q$, recall that a cohomology class in $H^1({\Q_v},E[n]$ is said to be unramified if it belongs to the kernel of the map $H^1({\Q_v},E[n])\to H^1(I_v,E(\ol{\Q_v})[n])$, where $I_v\subset G_{\Q_v}$ is the inertia subgroup. We define the subgroup $H_\ur^1({\Q},E[n])$ of {\bf everywhere unramified cohomology classes} to be the subgroup of elements that are unramified everywhere locally.
Let $(K,\alpha)$ be a monogenized cubic field, and let $f$ denote the monic binary cubic form such that $f(x,1)$ is the characteristic polynomial of $\alpha$. 
%We say that $E_f\colon y^2=f(x)$ is the {\bf elliptic curve associated to $(K,\alpha)$}. 
We next relate the $2$-torsion in the dual of the class group of $K$ to $H_\ur^1({\Q},E_f[2])$.

\begin{proposition} \label{prop-classgroup}
Let $K$, $f$, and $E_f$ be as above and let $\Cl^*(K)$ denote the group dual to $\Cl(K)$. Then we have a natural bijection
\begin{equation*}
\Cl^*(K)[2] \overset{\sim}\longrightarrow H_\ur^1({\Q},E_f[2]).
\end{equation*}
\end{proposition}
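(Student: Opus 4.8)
The plan is to realize both sides of the claimed bijection as subgroups of $H^1(\Q, E_f[2])$ cut out by local conditions, and then match those conditions place by place. By Lemma~\ref{lemEC2torGC} applied to $F = \Q$, we have $H^1(\Q, E_f[2]) \simeq (K^\times/K^{\times 2})_{\on{N} \equiv 1}$, and the subgroup $H^1_\ur(\Q, E_f[2])$ consists of those classes that are unramified at every place. On the arithmetic side, I would identify $\Cl^*(K)[2]$ with $\Cl(K)[2]$ (both being finite abelian $2$-groups isomorphic to their own duals), and then recall the standard description of $\Cl(K)[2]$ via Kummer theory: a $2$-torsion ideal class corresponds to an element $\alpha \in K^\times/K^{\times 2}$ whose associated ideal $(\sqrt\alpha)$ is the square root of a principal ideal, i.e. such that the fractional ideal generated by $\alpha$ is a perfect square. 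The norm condition $\on{N}(\alpha) \in \Q^{\times 2}$ will appear naturally because principal ideals have square norm; this is precisely the $(K^\times/K^{\times 2})_{\on{N}\equiv 1}$ constraint.

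The heart of the matter is the local reinterpretation. First I would set up the classical exact sequence describing $\Cl(K)[2]$ inside $K^\times/K^{\times 2}$: an element lands in the $2$-torsion of the class group exactly when the fractional ideal it generates is twice a divisor, which by unique factorization of ideals is equivalent to saying that at every finite prime $\mm$ of $\O_K$, the $\mm$-adic valuation of $\alpha$ is even. I would then translate this ``even valuation everywhere'' condition into the statement that the class of $\alpha$ is unramified at every finite place of $\Q$. Concretely, using the final assertion of Lemma~\ref{lemEC2torGC}, a class is unramified at $v$ iff the inertia subgroup $I_v$ acts trivially on $\sqrt\alpha$, and for the module $E_f[2] \hookrightarrow \Res_{K/\Q}\mu_2$ this amounts to the extension $\Q_v(\sqrt\alpha)/\Q_v$ being unramified at each prime of $K$ above $v$ — which holds precisely when $\alpha$ has even valuation at every prime of $K$ over $v$. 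Assembling these conditions over all finite $v$ gives exactly the integrality/square-ideal condition defining $\Cl(K)[2]$.

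The subtle point, and the step I expect to be the main obstacle, is the behavior at the archimedean place and the precise bookkeeping of units versus the full idele class picture. For the \emph{ordinary} (as opposed to narrow) class group, the place $v = \infty$ should impose no additional constraint, so $H^1_\ur(\Q, E_f[2])$ is unramified at every finite prime with the infinite place relaxed; I must check that this matches $\Cl(K)[2]$ and not $\Cl^+(K)[2]$. The cleanest way to control this is to compare the image of the global units and the contribution of the real places through the signs of $\alpha$ under the various real embeddings of $K$, verifying that the norm-one condition together with ``even valuation at all finite primes'' exactly carves out the $2$-torsion classes without an extra signature obstruction. I would make the correspondence canonical (hence the word ``natural'' in the statement) by exhibiting it as the map induced on cohomology by the inclusion $E_f[2] \hookrightarrow \Res_{K/\Q}\mu_2$ composed with the Kummer isomorphism, and then verify injectivity and surjectivity by a dimension count or by directly inverting the construction. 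The reason to phrase everything in terms of $\Cl^*(K)$ rather than $\Cl(K)$ is to make the duality with the unramified Selmer condition manifest, and matching these dual pairings under the self-duality of $E_f[2]$ is where care is needed to ensure the bijection respects group structure.
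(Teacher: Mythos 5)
Your strategy has a fatal gap at its foundation: the ``standard description'' you invoke for $\Cl(K)[2]$ is not correct. The group of classes $\alpha \in K^\times/K^{\times 2}$ whose fractional ideal $(\alpha)$ is the square of an ideal $\mathfrak{a}$ does \emph{not} equal $\Cl(K)[2]$; the map $\alpha \mapsto [\mathfrak{a}]$ is surjective with kernel the image of the unit group $\O_K^\times$, and this kernel is nontrivial: even after imposing your square-norm condition it is the group of norm-one units modulo squares, of order $4$ for totally real and $2$ for complex cubic $K$. So ``even valuation at all finite primes, plus square norm'' carves out a subgroup of $(K^\times/K^{\times 2})_{\on{N}\equiv 1}$ that is generically strictly larger than $\Cl(K)[2]$, and no dimension count or direct inversion will produce the claimed bijection. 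This unit obstruction is exactly why the proposition is stated for the \emph{dual} group $\Cl^*(K)[2]$ and why the paper's proof runs through class field theory rather than through ideal classes: nontrivial elements of $\Cl^*(K)[2]=\Hom(\Cl(K),\Z/2\Z)$ correspond to index-two subgroups of $\Cl(K)$, i.e.\ to quadratic extensions of $K$ unramified at all places; on the other side, Lemma~\ref{lemEC2torGC} shows that a class $\alpha \in H^1(\Q,E_f[2])$ is unramified at $v$ if and only if $I_v$ fixes $\sqrt{\alpha}$, i.e.\ if and only if $K(\sqrt{\alpha})/K$ is unramified above $v$, with the square-norm condition being automatic for such $\alpha$. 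Both sides thus enumerate the same set of unramified quadratic extensions of $K$, and no attempt is made (nor should be) to realize these as $2$-torsion ideal classes.

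Two of your local translations would also fail even if the global parametrization were repaired. At $v=2$, the asserted equivalence ``unramified at $v$ $\Leftrightarrow$ even valuation at the primes of $K$ above $v$'' is false: a unit can generate a ramified quadratic extension of a $2$-adic field (e.g.\ $\Q_2(\sqrt{3})/\Q_2$), so the unramified condition above $2$ is strictly stronger than evenness of valuations, and your count would be off there. And your treatment of the archimedean place is exactly backwards: in this paper $H^1_\ur(\Q,E_f[2])$ \emph{does} impose the unramified condition at $\infty$, which by Lemma~\ref{lemunramsize} is the strict condition (only the trivial class in $H^1(\R,E_f[2])$ is unramified), forcing $\alpha$ to be totally positive. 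It is precisely this archimedean constraint that selects the ordinary class group; keeping unramifiedness only at the finite places yields $H^1_{\on{f-ur}}(\Q,E_f[2])$, which corresponds to the dual of the \emph{narrow} class group (Proposition~\ref{prop-narrowclassgroup}). Declaring that ``the place $v=\infty$ should impose no additional constraint'' for the ordinary class group would identify your group with the narrow object, not with $\Cl^*(K)[2]$.
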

\begin{proof}
By class field theory, nontrivial elements in $\Cl^*(K)[2]$ correspond bijectively to unramified quadratic field extensions of $K$. Meanwhile, applying Lemma \ref{lemEC2torGC} to the elliptic curve $E_f$, we see that $H^1(\Q,E_f[2])$ is isomorphic to $(K^\times/K^{\times2})_{\on{N}\equiv1}$. Furthermore, an element $\sigma\in H^1(\Q,E_f[2])$, corresponding to the square class of $\alpha\in K^\times$ is unramified at a place $v$ of $\Q$ if and only if it maps to $0$ in $H^1(I_v,E_f[2](\ol{\Q_v}))$, where $I_v$ is the inertia subgroup of $G_{\Q_v}$. Applying Lemma \ref{lemEC2torGC} again, we see that this happens if and only if $I_v$ acts trivially on $\sqrt{\alpha}$. In other words, $\sigma$ is unramified if and only if the quadratic field $K(\sqrt{\alpha})$ is unramified over $K$. Moreover, if $K(\sqrt{\alpha})/K$ is unramified, then $\alpha$ must have square norm down to $\Q$. Therefore, nontrivial elements in $H^1_\ur(\Q,E_f[2])$ correspond bijectively to unramified quadratic extensions of $K$, yielding the result.
\end{proof}

We now consider the local picture. Let $v$ be a place of $\Q$ and let $(K,\alpha)$ be a monogenized cubic \'etale algebra over $\Q_v$ with corresponding monic binary cubic form $f$, and let $E = E_f$. The next lemma determines the size of $H^1_\ur({\Q_v},E[2])$.
\begin{lemma}\label{lemunramsize}
With notation as above, we have
\begin{equation*}
\#H^1_\ur(\Q_v,E[2])=\begin{cases}
\#E(\Q_v)[2]&\text{if $v$ is a finite place,}\\[.15in]
1&\text{if $v$ is an infinite place}.
\end{cases}
\end{equation*}
\end{lemma}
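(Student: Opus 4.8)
The plan is to compute $\#H^1_\ur(\Q_v,E[2])$ by exploiting the inflation-restriction sequence relating unramified cohomology to the cohomology of the residue field, and to treat the infinite and finite places separately. First I would dispose of the infinite case: when $v = \infty$, the inertia subgroup $I_v$ is all of $G_{\R}$ (there is no unramified part since $\R$ has no nontrivial unramified extensions), so the kernel defining $H^1_\ur(\Q_\infty,E[2])$ is trivial, giving $\#H^1_\ur = 1$. More carefully, for $v = \infty$ one has $\Q_v = \R$, $G_{\Q_v} = \Gal(\C/\R) = \BZ/2\BZ$, and $I_v = G_{\Q_v}$; the map $H^1(\R, E[2]) \to H^1(I_v, E[2])$ is then the identity, so its kernel is $0$.

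For a finite place $v = p$, the approach is to use the standard fact from the theory of unramified cohomology: for a finite unramified $G_{\Q_p}$-module $M$, the unramified cohomology $H^1_\ur(\Q_p, M)$ is isomorphic to $H^1(\widehat{\BZ}, M^{I_p}) \cong M^{I_p}/(\mathrm{Frob}_p - 1)M^{I_p}$, and by the local Euler characteristic formula together with local Tate duality, $\#H^1_\ur(\Q_p, E[2])$ equals $\#E[2]^{I_p}/(\mathrm{Frob}-1) = \#H^0(\Q_p, E[2]) = \#E(\Q_p)[2]$ in the case where $2$ is a unit; more robustly, one knows that for any finite $G_{\Q_p}$-module $M$ the unramified subgroup $H^1_\ur(\Q_p,M)$ has order equal to $\#H^0(\Q_p,M)$, since $H^1_\ur$ is the annihilator of $H^1_\ur$ for the dual module under local Tate duality and the two orders multiply to $\#H^0 \cdot \#H^0(M^\vee)$. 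The cleanest route, which I would follow, is to invoke directly that $\#H^1_\ur(\Q_p, E[2]) = \#E(\Q_p)[2] = \#H^0(\Q_p, E[2])$, a general identity for finite Galois modules over a local field valid at all $p$ (including $p = 2$), following from the inflation-restriction sequence $0 \to H^1(\mathrm{Gal}(\Q_p^\ur/\Q_p), E[2]^{I_p}) \to H^1_\ur(\Q_p, E[2]) \to 0$ and the fact that cohomology of the procyclic group $\widehat{\BZ} = \mathrm{Gal}(\Q_p^\ur/\Q_p)$ with coefficients in a finite module $N = E[2]^{I_p}$ satisfies $\#H^1(\widehat{\BZ}, N) = \#H^0(\widehat{\BZ}, N) = \#N^{\mathrm{Frob}}$.

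The main technical point---and the step I expect to require the most care---is verifying that this identity holds uniformly even at the bad primes $p = 2$ and the primes of bad reduction, where $E[2]$ need not be an unramified module, so that one genuinely works with $E[2]^{I_p}$ rather than all of $E[2]$. Here I would use the description of $E[2]$ from Lemma \ref{lemEC2torGC}, namely the exact sequence $1 \to E[2] \to \Res_{K/F}\mu_2 \to \mu_2 \to 1$, to track the $I_v$-fixed points concretely in terms of the splitting behavior of $K \otimes_{\Q} \Q_v$; the count $\#E(\Q_v)[2]$ then matches $\#E[2]^{G_{\Q_v}}$ by definition, and the equality $\#H^1_\ur = \#H^0$ closes the argument. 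I would conclude by noting that this reduction to the residue-field computation is exactly the content of the inflation-restriction sequence, so no case analysis on reduction type is actually needed once the general local identity $\#H^1(\widehat{\BZ}, N) = \#N^{\mathrm{Frob}}$ is in hand.
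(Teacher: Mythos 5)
Your proof is correct, but it takes a genuinely different route from the paper's. The paper argues concretely: using Lemma~\ref{lemEC2torGC}, it identifies $H^1(\Q_p,E[2])$ with the square-norm classes in $K_p^\times/K_p^{\times 2}$ and the unramified classes with those $\theta$ for which $K_p(\sqrt{\theta})/K_p$ is unramified, then enumerates the five possibilities for the \'etale algebra $K_p = \Q_p[x]/(f(x,1))$ (namely $\Q_p^3$, $\Q_p\oplus\Q_{p^2}$, $\Q_{p^3}$, $\Q_p\oplus F_{p,2}$, $F_{p,3}$) and checks that both $\#E(\Q_p)[2]$ and the number of such $\theta$ equal $4,2,1,2,1$ in the respective cases; at infinity it counts unramified quadratic \'etale extensions of $\R^3$ and $\R\oplus\C$. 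You instead invoke the general identity $\#H^1_\ur(\Q_p,M)=\#H^0(\Q_p,M)$ for an arbitrary finite $G_{\Q_p}$-module $M$: inflation-restriction gives $H^1_\ur(\Q_p,M)\cong H^1(\widehat{\Z},M^{I_p})$, and for a finite module over $\widehat{\Z}$ the kernel and cokernel of $\on{Frob}-1$ have equal size, so $\#H^1_\ur(\Q_p,E[2])=\#E[2]^{G_{\Q_p}}=\#E(\Q_p)[2]$ uniformly at all finite places, including $p=2$ and primes of bad reduction; at infinity the convention $I_\infty=G_\R$ makes the kernel defining $H^1_\ur$ trivially trivial. Your argument is shorter, needs no case analysis, and proves a strictly more general statement; the paper's computation has the advantage of staying in the explicit \'etale-algebra language used throughout (the same five-case breakdown recurs in the proof of Proposition~\ref{propsubsol}), and it exhibits the unramified classes explicitly as quadratic extensions of $K_p$, which is the form in which they are matched with class-field-theoretic data elsewhere in the paper. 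One caution: your side remark that $H^1_\ur$ is its own exact annihilator under local Tate duality is only the standard statement under hypotheses (unramified $M$ with $p\nmid\#M$), so it should not be cited as a general fact; since the argument you actually run is the inflation-restriction one, this does not affect the correctness of your proof.
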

\begin{proof}
The lemma follows immediately from a case by case analysis of the possibilities of $K_v\defeq \Q_v[x]/(f(x,1))$. When $v=p$ is a finite place, $K_p$ can be $\Q_p^3$, $\Q_p\oplus\Q_{p^2}$, $\Q_{p^3}$, $\Q_p\oplus F_{p,2}$, or $F_{p,3}$, where $\Q_{p^\ell}$ denotes the (unique) unramified degree-$\ell$ extension of $\Q_p$, and where $F_{p,\ell}$ is a totally ramified degree-$\ell$ extension of $\Q_p$. Thus, $K_p$ is a sum of one, two, or three field extensions of $\Q_p$. The size of $E(\Q_p)[2]$ is equal to $1+\beta$, where $\beta$ is the number of $\Q_p$-rational roots of $f(x,1)$, which is in turn equal to $1$ plus the number of times $\Q_p$ occurs as a factor of $K_p$. Hence, as we go over the five possibilities $K_p$, we see that $\#E(\Q_v)[2]$ equals $4$, $2$, $1$, $2$, and $1$, respectively. Meanwhile, we have $\#H^1_\ur(\Q_v,E[2])$ is equal to the number of elements $\theta\in K_p^\times/K_p^{\times^2}$ that have square norm such that $K_p(\sqrt{\theta})$ is unramified. This number is again easily seen to equal $4$, $2$, $1$, $2$, and $1$, respectively, as we go over the five possibilities for $K_p$.

When $v$ is infinite, the lemma follows since $\R^3$ and $\R\oplus\C$ each have exactly one unramified quadratic extension, namely, $\R^6$ and $\R^2\oplus\C^2$, respectively. The lemma follows.
\end{proof}

For a prime $p$, let $T_p$ denote the set of all isomorphism classes of monogenized \'{e}tale cubic extensions of $\Q_p$, and let $\Sigma_p \subset T_p$ be an open and closed set whose boundary has measure $0$. We say that the family $\Sigma = (\Sigma_p)_p$ is a {\bf large collection} of local specifications if, for all but finitely many primes $p$, the set $\Sigma_p$ contains all pairs monogenized \'{e}tale cubic extensions of $\Q_p$ that are not totally ramified. We say that family of monogenized cubic number fields is a {\bf large family} if it is defined by a large collection of local specifications. 

Given a large family $\FF$ of monogenized cubic fields corresponding to the collection $\Sigma$ of local specifications, we associate to it the following large family $\EE$ of elliptic curves. For each finite prime $p$, define $\Lambda_p\subset\Z_p^2$ via
\begin{equation*}
\Lambda_p\defeq \{(I(f),J(f):f\in\Sigma_p\}.
\end{equation*}
We define $\Lambda_\infty^\pm$ to be the set of pairs $(I,J)\in\R^2$ such that $\pm\Delta(I,J)>0$ and $H(I,J)<X$. Define $\EE = \EE^\pm$ to be the family of elliptic curves associated to the collection $(\Lambda_v)_v$, where $\Lambda_\infty=\Lambda_\infty^\pm$. We take $\cS$ to be the $2$-Selmer structure for $\EE$, where $\cS_v$ is the unramified local condition at all $v$. Since $\FF$ is large, it immediately follows that $\EE$ is large. By definition, it follows that $\cS$ is acceptable. In the next crucial result, we will prove that $\cS$ on $\EE$ is also sub-soluble.
\begin{proposition}\label{propsubsol}
The Selmer structure $\cS$ on $\EE$ is sub-soluble.
\end{proposition}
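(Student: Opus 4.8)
The plan is to unwind the definition of sub-solubility and check the required containment one place at a time. Since $\cS$ is defined by the unramified local condition at every place, we have $\cS_v(E) = H^1_\ur(\Q_v, E[2])$, and the assertion to be proved is that $H^1_\ur(\Q_v, E[2]) \subseteq \im(\delta_v)$ for every place $v$ and every $E = E_f \in \EE$, where $\im(\delta_v) \subseteq H^1(\Q_v, E[2])$ is the image of the local Kummer map. The infinite places are immediate: by Lemma \ref{lemunramsize}, $H^1_\ur(\R, E[2])$ is trivial, and the trivial class always lies in $\im(\delta_v)$. Hence all of the content is at the finite places.

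At a finite place $v = p$ I would use the explicit description of Lemma \ref{lemEC2torGC}: we identify $H^1(\Q_p, E[2])$ with $(K_p^\times/K_p^{\times 2})_{\on{N}\equiv 1}$, under which $H^1_\ur(\Q_p, E[2])$ consists of those square classes $\alpha$ for which $K_p(\sqrt{\alpha})/K_p$ is unramified, while $\im(\delta_v)$ is the image of the classical $2$-descent map sending a point $(x_0,y_0)$ to the class of $x_0 - \theta$. The argument then proceeds by a case analysis over the five possibilities for the \'etale algebra $K_p = \Q_p[x]/(f(x,1))$ enumerated in Lemma \ref{lemunramsize}. Throughout, I would compare the two relevant orders: Lemma \ref{lemunramsize} gives $\#H^1_\ur(\Q_p, E[2]) = \#E(\Q_p)[2]$, while the formula $\#(E(\Q_p)/2E(\Q_p)) = |2|_p^{-1}\,\#E(\Q_p)[2]$ gives $\#\im(\delta_v) = |2|_p^{-1}\,\#E(\Q_p)[2]$.

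For odd $p$ the two orders coincide, so it suffices to establish either containment. When $K_p$ is unramified (the types $\Q_p^3$, $\Q_p \oplus \Q_{p^2}$, and $\Q_{p^3}$), maximality of $f$ forces $p \nmid \disc(f) = \Delta(I(f),J(f))$, so $E$ has good reduction at $p$ and the classical identity $\im(\delta_v) = H^1_\ur(\Q_p, E[2])$ applies directly. When $K_p$ is ramified (the types $\Q_p \oplus F_{p,2}$ and $F_{p,3}$) the curve has bad reduction; in the totally ramified type $F_{p,3}$ the group $H^1_\ur$ is trivial and there is nothing to check, while in the type $\Q_p \oplus F_{p,2}$ one verifies by a direct valuation computation that the unique nontrivial unramified square class is represented by $x_0 - \theta$ for a suitable $\Q_p$-point, so that $H^1_\ur(\Q_p, E[2]) = \im(\delta_v)$.

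The delicate case, which I expect to be the main obstacle, is $p = 2$: here $|2|_2^{-1} = 2$, so $\im(\delta_v)$ is strictly larger than $H^1_\ur(\Q_2, E[2])$, the good-reduction identity is unavailable, and the isotropy arguments that trivialize the odd-residue-characteristic analysis break down. I would handle it by pinning down the unramified square classes explicitly---those $\alpha$ whose square root cuts out the unramified quadratic extension of each field factor of $K_2$---and matching each against the descent image computed on explicit $2$-adic points, again organized by the algebra type of $K_2$. The square-norm constraint recorded in Lemma \ref{lemEC2torGC} (and already exploited in the proof of Proposition \ref{prop-classgroup}) keeps these computations inside $(K_2^\times/K_2^{\times 2})_{\on{N}\equiv 1}$ and limits the number of classes that must be tested, which is what makes the finite verification tractable.
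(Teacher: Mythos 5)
Your proposal gets the framing right (reduce to finite places via Lemma~\ref{lemunramsize}, compare $\#H^1_\ur(\Q_p,E[2])=\#E(\Q_p)[2]$ against $\#\im(\delta_v)=|2|_p^{-1}\#E(\Q_p)[2]$, and organize by the five types of $K_p$), and the cases you actually settle are correct: the infinite places are trivial, and for odd $p$ with $K_p$ unramified, maximality of $f$ gives good reduction and the classical identity $\im(\delta_v)=H^1_\ur(\Q_p,E[2])$ applies. But the two cases you defer are precisely where the content of Proposition~\ref{propsubsol} lives, and for those you offer a plan rather than an argument. Even at odd $p$ in the type $\Q_p\oplus F_{p,2}$, your ``direct valuation computation'' works only when the Kummer image is generated by the rational $2$-torsion point $(r,0)$: maximality forces $r-\theta_2$ to be a unit, so $\delta((r,0))$ is an unramified class. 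If, however, $(r,0)\in 2E(\Q_p)$, then $\im(\delta_v)$ is generated by a non-torsion point whose $x$-coordinate you do not control, and conversely, exhibiting a $\Q_p$-point whose image is the prescribed unramified class amounts to proving solubility of the associated torsor --- which is exactly the statement being proved, so the ``verification'' is circular without further input. Note also that maximality is not a convenience here but essential in the ramified cases too: for non-maximal $f$ (e.g.\ $f=x(x-p)(x+p)$) the containment fails, so any valid argument must use it throughout, not only in the good-reduction case.

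The gap is most serious at $p=2$, which you correctly flag as the main obstacle but then handle by fiat. There, $\im(\delta_2)$ has index-$2$-larger order than $H^1_\ur(\Q_2,E[2])$, so one must prove the one-sided containment $H^1_\ur\subseteq\im(\delta_2)$ for \emph{every} curve in $\EE$ --- infinitely many curves, with every possible reduction type at $2$ and wildly ramified quadratic extensions in play --- and ``matching unramified classes against the descent image computed on explicit $2$-adic points'' presupposes knowledge of generators of $E(\Q_2)/2E(\Q_2)$ that no finite case analysis by the type of $K_2$ can supply. What is missing is a mechanism for computing the intersection $\cI_p=H^1_\ur(\Q_p,E[2])\cap\im(\delta_p)$ for curves with bad reduction uniformly in $p$. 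The paper gets this from the results of~\cite{MR1370197}: cohomological triviality of $E[2]$ as a $\Gal(L/\Q_p)$-module plus inflation--restriction yields the identification
\begin{equation*}
\cI_p\simeq \frac{E(M)[2]\cap(\tau-1)E(M)}{(\tau-1)E(M)[2]},
\end{equation*}
where $M$ is an explicit unramified extension with $\tau$ generating $\Gal(M/\Q_p)$, and a N\'eron-model lemma (two-torsion with nonsingular reduction lies in $(\tau-1)E(M)$, which is where maximality enters) then shows $\cI_p\simeq E(\Q_p)[2]$; comparing with Lemma~\ref{lemunramsize} forces $H^1_\ur\subseteq\im(\delta_p)$. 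This argument is uniform in $p$, including $p=2$. Some such cohomological/N\'eron-model input (or an equivalent) is unavoidable; without it your proposal proves the proposition only at the infinite places and at odd primes of good reduction.
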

\begin{proof}
Let $(K,\alpha)$ be a monogenized field in $\FF$, let $f$ be the corresponding monic binary form, and let $E=E_f$ be the associated elliptic curve. The content of the proposition is simply that every unramified class in $H^1(\Q_v,E[2])$ is soluble for every place $v$ of $\Q$. When $v$ is infinite, this is immediate since only the identity class is unramified. So assume that $v=p$ is a finite place.

We use the results of \cite{MR1370197} to understand the intersection $\cI_p$ of $H_\ur^1({\Q_v},E[2])$ with the image of the Kummer map in $H^1({\Q_v},E[2])$. Denote $\Q_p(E(\overline{\Q_p})[2])$ by $L$, and let $L'$ be the maximal subfield of $L$ that is unramified over $\Q_p$. Let $M$ denote the quadratic unramified extension of $L'$, and denote a generator of $\Gal(M/\Q_p)$ by $\tau$. Since Condition 4 in \cite[Lemma 5.1]{MR1370197} is easily verified, it follows from the same lemma that $E[2]$ is cohomologically trivial as a $\Gal(L/\Q_p)$-module. From the inflation-restriction exact sequence, it then follows that we have the isomorphism
\begin{equation*}
H^1(\Q_p,E[2])\simeq \Hom_{\Gal(L/K)}(\on{Gal}(\ol{\Q_p}/L),E[2]),
\end{equation*}
where $\Hom_{\Gal(L/K)}$ denotes homomorphisms invariant under the action of $\on{Gal}(L/K)$.
We then apply \cite[Theorem 3.4]{MR1370197} to deduce that we have the isomorphism
\begin{equation}\label{eqsubsolfin}
\cI_p\simeq \frac{E(M)[2]\cap (\tau-1)E(M)}{(\tau-1)E(M)[2]}.
\end{equation}
In what follows, we will prove that $\cI_p$ is isomorphic to $E(\Q_p)[2]$, which in conjunction with Lemma~\ref{lemunramsize}, yields the result.

Let $N$ denote the N\'{e}ron model of $E$ over $\cO_M$, the ring of integers of $M$. The identity component $N^0$ of $N$ is an open subgroup scheme of $N$; the group $N^0(\cO_M)$ is isomorphic to a subgroup $E_0(M)$ of $E(M)$; and $E_0(M)$ contains all the points in $E(M)$ with nonsingular reduction.
From \cite[Lemma 3.6]{MR1370197}, it follows that $E_0(M)\cap E(M)[2]$ is contained in $(\tau-1)E(M)$.
We now compute $\cI_p$ by considering each possibility for the \'etale algebra $K_p\defeq \Q_p[x]/(f(x,1))$. First suppose that $K_p$ is unramified. In this case, every element of $E(M)[2]$ has nonsingular reduction and thus belongs to $(\tau-1)E(M)$. When $K_p$ is $\Q_p^3$ or $\Q_{p^3}$, the action of $\tau$ on $E(M)[2]$ is trivial, and thus we have $\cI_p\simeq E(M)[2]=E(\Q_p)[2]$ as necessary. When $K_p=\Q_p\oplus\Q_{p^2}$, then $E(M)[2]$ is isomorphic to the Klein four group, and the action of $\tau$ interchanges two of the nontrivial elements, while keeping the other two elements fixed. Thus again, $\cI_p$ is isomorphic to $E(\Q_p)[2]$, as necessary.

Finally, we consider the cases where $K_p$ is ramified. In all of these cases, we have $E(M)[2]=E(\Q_p)[2]$, with a trivial action of $\tau$, so $\cI_p\simeq E(\Q_p)[2]\cap(\tau-1)E(M)=E(\Q_p)[2]$, since every element of $E(\Q_p)[2]$ has nonsingular reduction. This concludes the proof of Proposition \ref{propsubsol}.
\end{proof}

\subsection{The case of the narrow $2$-class group} \label{sec-narrowgen}

The difference between the ordinary and narrow class group is purely archimedean. Indeed, let $K$, $f$, and $E_f$ be as in Proposition~\ref{prop-classgroup}, and let $H^1_{\on{f-ur}}(\Q, E_f[2])$ be the subgroup of cohomology classes unramified at all finite places. Then class field theory yields the following analogue of Proposition~\ref{prop-classgroup} for the narrow $2$-class group:

\begin{proposition} \label{prop-narrowclassgroup}
    Let $\on{Cl}^{+,*}(K)$ denote the group dual to $\on{Cl}^+(K)$. We have a natural bijection
    $$\on{Cl}^{+,*}(K)[2] \overset{\sim}\longrightarrow H^1_{\on{f-ur}}(\Q,E_f[2]).$$
\end{proposition}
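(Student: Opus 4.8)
The plan is to mirror the proof of Proposition~\ref{prop-classgroup} exactly, changing only the treatment of the archimedean place. Recall that the narrow class group $\on{Cl}^+(K)$ classifies ideals modulo totally positive principal ideals, so by class field theory nontrivial elements of $\on{Cl}^{+,*}(K)[2]$ correspond bijectively to quadratic extensions of $K$ that are unramified at all \emph{finite} places, but which may ramify at the real (archimedean) places. First I would invoke Lemma~\ref{lemEC2torGC} applied to $E_f$ to identify $H^1(\Q, E_f[2])$ with $(K^\times/K^{\times 2})_{\on{N}\equiv 1}$, so that a class $\sigma$ corresponds to a square class of some $\alpha \in K^\times$ with $\on{N}(\alpha)$ a square in $\Q^\times$.

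The second step is the local-ramification dictionary. For a \emph{finite} place $v$, the last assertion of Lemma~\ref{lemEC2torGC} says that $\sigma$ is unramified at $v$ if and only if the inertia subgroup $I_v$ acts trivially on $\sqrt{\alpha}$, which (as in the proof of Proposition~\ref{prop-classgroup}) is equivalent to $K(\sqrt{\alpha})/K$ being unramified at all places above $v$. Since $H^1_{\on{f-ur}}(\Q, E_f[2])$ is by definition the subgroup of classes unramified at every finite place, imposing this condition for all finite $v$ shows that nontrivial elements of $H^1_{\on{f-ur}}(\Q, E_f[2])$ correspond exactly to square classes $\alpha$ for which $K(\sqrt{\alpha})/K$ is unramified at every finite place. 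This is precisely the defining condition matching the narrow class group on the class-field-theory side, so the two sets are in bijection. As before, one checks that the norm-square condition $\on{N}(\alpha) \in \Q^{\times 2}$ is automatically satisfied once $K(\sqrt{\alpha})/K$ is unramified at all finite places, so it imposes no extra constraint.

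The one genuine point of difference from Proposition~\ref{prop-classgroup} is that \emph{no} condition is placed at the infinite place, which is exactly what distinguishes the narrow from the ordinary class group: allowing ramification at the real places of $K$ is the cohomological shadow of quotienting only by totally positive principal ideals rather than all principal ideals. I would make this explicit by noting that dropping the unramifiedness requirement at $v = \infty$ enlarges the relevant subgroup of $H^1(\Q, E_f[2])$ precisely by the archimedean contribution, matching the enlargement of $\on{Cl}^*(K)[2]$ to $\on{Cl}^{+,*}(K)[2]$. The main (and essentially only) obstacle is bookkeeping the archimedean place correctly: one must verify that the dual formulation $\on{Cl}^{+,*}(K)[2]$ interacts with the norm-residue/class-field-theory correspondence so that ramification at the real places on the field side corresponds exactly to \emph{not imposing} the local condition at $\infty$ on the cohomological side, rather than imposing some nontrivial positivity condition there. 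Once this is pinned down, the bijection follows formally from Lemma~\ref{lemEC2torGC} together with class field theory, just as in the proof of Proposition~\ref{prop-classgroup}.
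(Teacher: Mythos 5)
Your overall strategy coincides with the paper's: the paper gives no separate argument for this proposition, asserting only that class field theory yields it as the analogue of Proposition~\ref{prop-classgroup} with ``unramified everywhere'' replaced by ``unramified at all finite places,'' and that is exactly the mirroring you propose. However, there is one genuine gap, and it sits precisely at the step you dispatch with ``as before, one checks'': the claim that $\on{N}(\alpha) \in \Q^{\times 2}$ is automatic once $K(\sqrt{\alpha})/K$ is unramified at all finite places. In the ordinary case, the natural verification of the corresponding claim uses unramifiedness at the \emph{real} places: finite unramifiedness gives $(\alpha) = \mathfrak{a}^2$, hence $|\on{N}(\alpha)| \in \Q^{\times 2}$, and total positivity of $\alpha$ (which is exactly what unramifiedness of $K(\sqrt{\alpha})/K$ at the real places means) forces $\on{N}(\alpha) > 0$. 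In the narrow setting, the archimedean hypothesis is the one you have deliberately discarded, so ``as before'' does not apply: a priori $\on{N}(\alpha)$ could equal $-m^2$, since $\alpha$ may be negative at an odd number of real embeddings of $K$. This is not a formality: it is the only point where the narrow case differs mathematically from Proposition~\ref{prop-classgroup}, and without it you obtain only an injection $H^1_{\on{f-ur}}(\Q, E_f[2]) \hookrightarrow \on{Cl}^{+,*}(K)[2]$ rather than a bijection, since $H^1(\Q,E_f[2]) \simeq (K^\times/K^{\times2})_{\on{N}\equiv1}$ has the square-norm condition built in. Your closing paragraph correctly senses that the archimedean bookkeeping is the crux, but it leaves the crux unresolved.

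The claim is true, and here is one way to close the gap. Finite unramifiedness gives $(\alpha)=\mathfrak{a}^2$, so $\on{N}(\alpha) = \pm m^2$ for some $m \in \Q^{\times}$; suppose for contradiction that $\on{N}(\alpha) = -m^2$. Under the Kummer identifications $H^1(K,\mu_2) \simeq K^\times/K^{\times 2}$ and $H^1(\Q,\mu_2) \simeq \Q^\times/\Q^{\times 2}$, the corestriction map is the norm map, and local corestriction carries classes unramified at a finite place to classes unramified at that place: by compatibility of local class field theory with the transfer map, the corestriction of a character killing $\mc{O}_v^\times$ is a character killing $\Z_p^\times$. Combining this with the standard formula expressing the restriction at $p$ of a global corestriction as a sum of local corestrictions over places $v \mid p$, we conclude that $\Q(\sqrt{\on{N}(\alpha)})/\Q = \Q(\sqrt{-1})/\Q$ would be unramified at every finite prime, contradicting its ramification at $2$. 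Hence $\on{N}(\alpha) = m^2 \in \Q^{\times 2}$, and with this verified, the rest of your argument (which is the paper's intended one) goes through.
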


Of course, when $K$ is a complex cubic field, we have $\on{Cl}^+(K) \simeq \on{Cl}(K)$, so we may assume that $K$ is totally real, in which case a classical result of Armitage and Fr\"{o}hlich~\cite{MR214566} states that the kernel of the surjective map $\on{Cl}^{+}(K)[2] \twoheadrightarrow \on{Cl}(K)[2]$ has order $1$ or $2$. Now, by Proposition~\ref{prop-narrowclassgroup}, we get the following commutative diagram of group homomorphisms
\begin{equation*}
\begin{tikzcd}
\on{Cl}^{+,*}(K)[2] \arrow{r}{\sim} & H_{\on{f-ur}}^1(\Q, E_f[2]) \arrow{r} & H^1(\R, E_f[2]) \\
\on{Cl}^*(K)[2] \arrow[hook]{u} \arrow[swap]{r}{\sim} & H_{\on{ur}}^1(\Q, E_f[2]) \arrow[hook]{u} \arrow[two heads]{r} & 1 \arrow[hook]{u}
\end{tikzcd}
\end{equation*}
The result of Armitage-Fr\"{o}hlich implies that the image of $\on{Cl}^{+,*}(K)[2]$ in $H^1(\R, E_f[2])$ has order $1$ or $2$, since $\on{Cl}^*(K)[2]$ has trivial image in $H^1(\R, E_f[2])$, as noted in the proof of Proposition~\ref{propsubsol}. Now $H^1(\R, E_f[2]) \simeq (\Z/2\Z)^2$ by Lemma~\ref{lemEC2torGC}, so the image of $\on{Cl}^{+,*}(K)[2]$ in $H^1(\R, E_f[2])$ is one of the four subgroups of order at $1$ or $2$.

We now give a description of the image of $\on{Cl}^{+,*}(K)[2]$ in $H^1(\R, E_f[2])$ by leveraging a connection to the arithmetic of quartic number fields. A result of Bhargava--Hanke--Shankar (see~\cite[proof of Theorem~2.15]{BSHpreprint}, building on a result of Heilbronn~\cite{MR280461}) states that, if the Galois group of the normal closure of $K$ is isomorphic to $S_3$, then quadratic field extensions $\wt{K}/K$ unramified at all finite places are in bijection with \textbf{nowhere overramified} quartic fields $L$ with normal closure having Galois group $S_4$. Here, nowhere overramified means that no finite prime of $\Q$ has all of its ramification indices in $L$ divisible by $2$, and $K$ is none other than the cubic resolvent field of $L$. In particular, the discriminants of $L$ and $K$ are equal, so they must have the same sign. Thus, since we are assuming $K$ is totally real, $L$ is either totally real or totally complex.

Next, a theorem of Wood~\cite[Theorem~1.1]{MR2899953} implies that the ring of integers of a quartic number field with monogenic cubic resolvent can always be realized as the ring of global sections of a degree-$4$ subscheme of $\mathbb{P}_{\Z}^1$, i.e., the vanishing locus of an integral binary quartic form. More precisely, let $V_2$ denote the space of binary quartic forms, as in the proof of Theorem~\ref{thm-countingselmers}. Then by Wood's result, the set of nowhere overramified quartic fields $L$ with a monogenized cubic resolvent field $K$ are in bijection with the set of $\on{PGL}_2(\Z)$-orbits on integral binary quartic forms $g \in V_2(\Z)^{(I(f),J(f))}$ such that $g$ is nowhere overramified, meaning that it does not factor as a scalar multiple of the square of a binary quadratic form modulo any finite prime of $\Q$.

Synthesizing the discussion of the above three paragraphs, we obtain a bijective map
\begin{equation} \label{eq-binaryquartics}
H^1_{\on{f-ur}}(\Q, E_f[2]) \overset{\sim}\longrightarrow \on{PGL}_2(\Z) \backslash \left\{g \in  V_2(\Z)^{(I(f),J(f))} : g \text{ nowh.~overram.}\right\}.
\end{equation}
By work of Bhargava--Gross~\cite[\S6]{MR3156850}, a class $\xi \in H^1_{\on{f-ur}}(\Q, E_f[2])$ is soluble in $H^1(\R, E_f[2])$ if and only if the genus-$1$ curve with equation $z^2 = g(x,y)$ has an $\R$-point, where $g$ is a binary quartic form corresponding to $\xi$ via the bijection in~\eqref{eq-binaryquartics}. Thus, it might happen that $\xi$ corresponds to a negative-definite binary quartic form, in which case $\xi$ is insoluble over $\R$, and we cannot na\"{i}vely realize $\on{Cl}^{+,*}(K)$ as a Selmer group associated to a subsoluble $2$-Selmer structure on $E_f$.

On the other hand, note that if $g$ is negative-definite, then $-g$ is positive-definite, so the curve $z^2 = g(x,y)$ arising from a class in $H^1_{\on{f-ur}}(\Q, E_f[2])$ is always soluble over $\R$ up to quadratic twist. Letting $f^*(x,y) = f(x,-y)$, we get a commutative diagram
\begin{equation} \label{eq-binaryquartics2}
\begin{tikzcd}
H^1(\R, E_f[2]) \arrow[swap]{d}{\rotatebox{90}{$\sim$}}  & H^1_{\on{f-ur}}(\Q, E_f[2]) \arrow{l}\arrow{r}{\sim} \arrow[swap]{d}{\rotatebox{90}{$\sim$}} & \on{PGL}_2(\Z) \backslash \left\{g \in  V_2(\Z)^{(I(f),J(f))}  : g \text{ nowh.~overram.}\right\} \arrow{d}{g \,\mapsto\, -g} \\
H^1(\R, E_{f^*}[2])  & H^1_{\on{f-ur}}(\Q, E_{f^*}[2]) \arrow{l}\arrow[swap]{r}{\sim} & \on{PGL}_2(\Z) \backslash \left\{g \in  V_2(\Z)^{(I(f),-J(f))} : g \text{ nowh.~overram.}\right\} 
\end{tikzcd}
\end{equation}
If we write $H^1(\R, E_f[2]) = \{1, \xi_1, \xi_2, \xi_3\} = H^1(\R, E_{f^*}[2])$, then the vertical isomorphism on the left of~\eqref{eq-binaryquartics2} sends $1 \mapsto 1$, and without loss of generality exchanges $\xi_1$ and $\xi_2$ but fixes $\xi_3$. The image of $H^1_{\on{f-ur}}(\Q, E_f[2])$ in $H^1(\R, E_f[2])$ is thus either $\{1\}$, $\{1, \xi_1\}$, or $\{1, \xi_2\}$. We may assume that $\xi_1$ is soluble for $E_f$, in which case $\xi_2$ is soluble for $E_{f^*}$. In particular, $\xi \in H^1_{\on{f-ur}}(\Q, E_f[2])$ is trivial or insoluble over $\R$ if and only if the corresponding class $\xi \in H^1_{\on{f-ur}}(\Q, E_{f^*}[2])$ is soluble.

Now, let $\mc{S}(E_f)$ be the unramified Selmer structure as in the previous section, and let $\wt{\mc{S}}(E_f)$ be the $2$-Selmer structure defined by taking the soluble condition at $\infty$ and the unramified condition at all finite primes; note that $\wt{\mc{S}}$ is sub-soluble. Then combining the above discussion with Proposition~\ref{prop-narrowclassgroup} and the principle of inclusion-exclusion yields the following result:
\begin{proposition} \label{prop-narrowsub}
Let $K$, $f$, and $E_f$ be as above. Then we have that
\begin{align*}
& \on{Cl}_2^+(K) \simeq H^1_{\on{f-ur}}(\Q, E_f[2]) = \on{Sel}_{\wt{\mc{S}}(E_f)}(E_f) \cup \on{Sel}_{\wt{\mc{S}}(E_f{^*})}(E_{f^*}), \quad \text{and} \\
& \on{Sel}_{\wt{S}(E_f)}(E_f) = \on{Sel}_{\mc{S}(E_f)}(E_f) \quad \text{or}\quad \on{Sel}_{\wt{\mc{S}}(E_f{^*})}(E_{f^*}) = \on{Sel}_{\mc{S}(E_f)}(E_f).
\end{align*}
In particular, we have that
\begin{align} \label{eq-inclusionexclusion1}
\#\on{Cl}_2^+(K) & = \big(\#\on{Sel}_{\wt{\mc{S}}(E_f)}(E_f) - \#\on{Sel}_{\mc{S}(E_f)}(E_f)\big) +  \#\on{Sel}_{\wt{\mc{S}}(E_{f^*})}(E_{f^*})  \\
\#\on{Cl}_2^+(K)^2 & = \big(\#\on{Sel}_{\wt{\mc{S}}(E_f)}(E_f)^2 - \#\on{Sel}_{\mc{S}(E_F)}(E_f)^2\big) +  \#\on{Sel}_{\wt{\mc{S}}(E_{f^*})}(E_{f^*})^2.  \label{eq-inclusionexclusion2}
\end{align}
\end{proposition}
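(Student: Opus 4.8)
The plan is to reformulate the entire statement in terms of a single localization map at the real place, and then to read off both displayed identities as an inclusion--exclusion. First I would observe that since $K_{f^*}=K_f$ as étale algebras, the Galois modules $E_f[2]$ and $E_{f^*}[2]$ coincide; hence $V\defeq H^1_{\on{f-ur}}(\Q,E_f[2])=H^1_{\on{f-ur}}(\Q,E_{f^*}[2])$ is one and the same subgroup of $H^1(\Q,E_f[2])$, and the restriction $\on{loc}_\infty\colon V\to H^1(\R,E_f[2])=H^1(\R,E_{f^*}[2])\defeq W$ is literally the same map for $E_f$ and for $E_{f^*}$. This identification is essential: it is what lets the quadratic-twist comparison speak about the \emph{same} class $\xi$ for both curves. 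By Proposition~\ref{prop-narrowclassgroup} (together with duality for finite abelian groups) we have $\on{Cl}_2^+(K)\simeq V$, and by Lemma~\ref{lemEC2torGC} we have $W\simeq(\Z/2\Z)^2=\{1,\xi_1,\xi_2,\xi_3\}$.

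Next I would identify the three Selmer groups in the statement as preimages under $\on{loc}_\infty$. Writing $S_f,S_{f^*}\subset W$ for the images of the Kummer maps at $\infty$ (the soluble conditions), the discussion preceding the proposition gives $S_f=\{1,\xi_1\}$ and $S_{f^*}=\{1,\xi_2\}$, so $S_f\cap S_{f^*}=\{1\}$; and Lemma~\ref{lemunramsize} shows the unramified condition at $\infty$ is trivial. Setting $A\defeq\on{Sel}_{\wt{\mc{S}}(E_f)}(E_f)=\on{loc}_\infty^{-1}(S_f)$, $B\defeq\on{Sel}_{\wt{\mc{S}}(E_{f^*})}(E_{f^*})=\on{loc}_\infty^{-1}(S_{f^*})$, and $C\defeq\on{Sel}_{\mc{S}(E_f)}(E_f)=\ker(\on{loc}_\infty|_V)=\on{loc}_\infty^{-1}(\{1\})$ (all inside $V$), one obtains $A\cap B=\on{loc}_\infty^{-1}(S_f\cap S_{f^*})=C$ for free.

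The single nontrivial input, already secured in the paragraphs above via the real-solubility criterion of Bhargava--Gross and the twist $g\mapsto -g$, is that $\on{loc}_\infty(V)$ is one of $\{1\}$, $\{1,\xi_1\}$, $\{1,\xi_2\}$ --- equivalently $\xi_3\notin\on{loc}_\infty(V)$. Granting this, $\on{loc}_\infty(V)\subseteq S_f\cup S_{f^*}$, so $V=A\cup B$, which is the first displayed equality $\on{Cl}_2^+(K)\simeq V=A\cup B$. A three-way case check on $\on{loc}_\infty(V)$ then yields the claimed dichotomy $A=C$ or $B=C$: when $\on{loc}_\infty(V)=\{1\}$ all three groups equal $V$; when $\on{loc}_\infty(V)=S_f$ one finds $A=V$ and $B=C$; and when $\on{loc}_\infty(V)=S_{f^*}$ one finds $B=V$ and $A=C$. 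I expect this real-solubility input to be the crux of the argument: it is precisely what excludes the ``diagonal'' class $\xi_3$ and thereby forces $V$ to be covered by the two soluble Selmer groups, while everything else is bookkeeping with the localization map.

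Finally I would extract the two numerical identities. From $A\cap B=C$ and $V=A\cup B$, ordinary inclusion--exclusion gives $\#V=\#A+\#B-\#C=(\#A-\#C)+\#B$, which is~\eqref{eq-inclusionexclusion1}. For~\eqref{eq-inclusionexclusion2} I would use the algebraic identity $(\#A+\#B-\#C)^2=(\#A^2-\#C^2+\#B^2)+2(\#C-\#A)(\#C-\#B)$; the dichotomy $A=C$ or $B=C$ forces the final product to vanish, so, since $\#\on{Cl}_2^+(K)=\#V=\#(A\cup B)$, squaring yields $\#\on{Cl}_2^+(K)^2=\#A^2-\#C^2+\#B^2$, which is exactly~\eqref{eq-inclusionexclusion2}. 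It is worth emphasizing that the squared identity genuinely requires the dichotomy, not merely $A\cap B=C$, since that is what makes the cross term disappear.
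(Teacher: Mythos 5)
Your proof is correct and takes essentially the same approach as the paper: the paper's own proof of this proposition \emph{is} the discussion preceding it (Proposition~\ref{prop-narrowclassgroup}, the Bhargava--Gross real-solubility criterion, and the twist $g \mapsto -g$) combined with inclusion--exclusion, and you have simply made that combination explicit by working with the localization map at the real place and the preimages $A$, $B$, $C$. Two minor refinements worth noting: since $\on{loc}_\infty(V)$ is automatically a subgroup of $(\Z/2\Z)^2$, excluding $\xi_3$ already forces the image to be $\{1\}$, $\{1,\xi_1\}$, or $\{1,\xi_2\}$, so the paper's appeal to Armitage--Fr\"{o}hlich becomes redundant in your formulation; and your identity $(\#A+\#B-\#C)^2=\bigl(\#A^2-\#C^2+\#B^2\bigr)+2(\#C-\#A)(\#C-\#B)$ correctly isolates why \eqref{eq-inclusionexclusion2} genuinely requires the dichotomy $A=C$ or $B=C$, and not merely $A\cap B=C$.
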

In this way, the size of the narrow $2$-class group, as well as its square, can be expressed in terms of sizes of Selmer groups associated to sub-soluble Selmer structures.

\subsection{The proof of Theorem~\ref{thm-main}(a) and (b)} \label{sec-proofab}

Let the large family $\mc{E} = \mc{E}^{\pm}$ and the Selmer structure $\mc{S}$ be defined just as in \S\ref{sec-ordinary}. By Theorem~\ref{thm-countingselmers} in conjunction with Propositions~\ref{prop-classgroup} and~\ref{propsubsol}, it suffices to compute the local masses $\mc{M}_v(2; \mc{E},\mc{S})$ for each place $v$ of $\Q$.

%Fix $a \in \{0,1,2\}$, and let $I' \defeq \upkappa_a^2I$ and $J' \defeq \upkappa_a^3J$. Let $f \in \R[x,y]$ be a separable monic binary cubic form with $x^2y$-coefficient $a$. 
%For each $i \in \{0,2+,2-\}$ if $\Delta(f) > 0$ and for $i = 1$ if $\Delta(f) < 0$, the action of $\on{PGL}_2(\R)$ on the set $\on{Inv}^{-1}(I'(f),J'(f)) \cap  V(\R)^{(i)}$ consists of a single orbit; let $g_i(f)$ be a representative of this orbit. Define the quantity $\alpha_\infty(f)$ by
%\begin{equation} \label{eq-defalphinf}
%\alpha_\infty(f) \defeq \begin{cases}
%\big(\#\on{Stab}_{\on{PGL}_2(\R)}(g_0(f))\big)^{-1}, & \text{ if $\Delta(f) > 0$,} \\
% \big(\#\on{Stab}_{\on{PGL}_2(\R)}(g_1(f))\big)^{-1}, & \text{ if $\Delta(f) < 0$.} \end{cases}
% \end{equation}
% The following proposition determines the value of the quantity $\alpha_\infty(f)$:
% \begin{proposition} \label{prop-infmasses}
% Let $f$ be as above. Then $\alpha_\infty(f)$ is a constant with value $\frac{1}{4}$ when $\Delta(f) > 0$ and with value $\frac{1}{2}$ when $\Delta(f) < 0$.
% \end{proposition}
% \begin{proof}
% This follows immediately from~\cite[Lemma~2.2]{MR3272925}, where the values of the stabilizers occurring on the right-hand side of~\eqref{eq-defalphinf} are determined.
% \end{proof}

For the mass at $\infty$, substituting the result of Lemma~\ref{lemunramsize} into~\eqref{eqMass} and observing that $\#E(\R)[2] = 4$ if $E \in \mc{E}^+$ and $\#E(\R)[2] = 2$ if $E \in \mc{E}^-$, we deduce that
\begin{equation} \label{eq-infmass}
    \mc{M}_\infty(2; \mc{E},\mc{S}) = \begin{cases} \frac{1}{4}, & \text{ if $\mc{E} = \mc{E}^+$,} \\
\frac{1}{2}, & \text{ if $\mc{E} = \mc{E}^-$.} \end{cases}
\end{equation}
% \noindent \emph{The masses at finite places}. Let $p$ be a prime, and let $f \in \Z_p[x,y]$ be a maximal monic binary cubic form with $x^2y$-coefficient $a$. Let $V(\Z_p)^{\on{sol}} \subset V(\Z_p)$ denote the subset of soluble binary quartic forms, and let $V(\Z_p)^+ \subset V(\Z_p)^{\on{sol}}$ denote the subset of binary quartic forms divisible by $\upkappa_a$. Define the quantity $\alpha_p(f)$ by
%  \begin{align*}
%  \alpha_p(f) & \defeq \sum_{g \in \frac{\on{Inv}^{-1}(I'(f),J'(f)) \cap V(\Z_p)^+}{\on{PGL}_2(\Q_p)}} \big(\#\on{Stab}_{\on{PGL}_2(\Q_p)}(g)\big)^{-1}.
%  \end{align*}
%  The following proposition determines the value of the quantity $\alpha_p(f)$:
%  \begin{proposition} \label{prop-pmass}
% Let $p$ and $f$ be as above. Then $\alpha_p(f)$ is a constant with value $1$.
%  \end{proposition}
%  \begin{proof}
%  By simply undoing the scaling by $\upkappa_a$, we see that
%  \begin{equation} \label{eq-interalpha}
%  \alpha_p(f) = \sum_{g \in \frac{\on{Inv}^{-1}(I(f),J(f)) \cap V(\Z_p)}{\on{PGL}_2(\Q_p)}} \big(\#\on{Stab}_{\on{PGL}_2(\Q_p)}(g)\big)^{-1}.
%  \end{equation}
%  The mass on the right-hand side of~\eqref{eq-interalpha} was determined in~\cite[Corollary~3.40]{BSHpreprint} to be equal to $1$.
%  \end{proof}
 As for the masses at finite places $p$, substituting the result of Lemma~\ref{lemunramsize} into~\eqref{eqMass}, we deduce that
 \begin{equation} \label{eq-pmass}
    \mc{M}_p(2; \mc{E},\mc{S}) = 1.
\end{equation}
Substituting the masses~\eqref{eq-infmass} and~\eqref{eq-pmass} into the third displayed equation in Theorem~\ref{thm-countingselmers} completes the proof of Theorem~\ref{thm-main}(a) and (b).

\subsection{The proof of Theorem~\ref{thm-main}(c)} \label{sec-proofc}

Let the large family $\mc{E} = \mc{E}^+$ be defined as in \S\ref{sec-ordinary}, and let $\mc{S}$ and $\wt{\mc{S}}$ be the Selmer structures defined in \S\ref{sec-ordinary} and \S\ref{sec-narrowgen}, respectively. By Theorem~\ref{thm-countingselmers} and~\eqref{eq-slightgen} in conjunction with Propositions~\ref{prop-narrowclassgroup} and~\ref{prop-narrowsub}, it suffices to compute the local masses $\mc{M}_v(2; \mc{E}, \mc{S})$ and $\mc{M}_v(2; \mc{E}; \wt{\mc{S}})$ for each place $v$ of $\Q$. The first family of masses was computed in the previous section; as for the second family, we have at the infinite place that 
\begin{equation} \label{eq-infmassc}
\mc{M}_\infty(2; \mc{E}; \wt{\mc{S}}) = 2 \times \mc{M}_\infty(2; \mc{E}; \mc{S}) = 1/2
\end{equation}
and for each finite place $p$ that
\begin{equation} \label{eq-pmassc}
\mc{M}_p(2; \mc{E}; \wt{\mc{S}}) = \mc{M}_p(2; \mc{E}; \mc{S}) = 1
\end{equation}
Substituting the masses~\eqref{eq-infmassc} and~\eqref{eq-pmassc} into~\eqref{eq-slightgen}, as well as into the third displayed equation in Theorem~\ref{thm-countingselmers}, and applying Proposition~\ref{prop-narrowsub} completes the proof of Theorem~\ref{thm-main}(c).

\subsection*{Acknowledgments}

We are very grateful to Aaron Landesman, Peter Sarnak, Ari Shnidman, Artane Siad, Melanie Matchett Wood, and Don Zagier for helpful conversations. The first-named author was supported by a Simons Investigator Grant and NSF grant~DMS-1001828. The second-named author was supported by an NSERC discovery grant and Sloan fellowship. The third-named author was supported by the NSF, under the Graduate Research Fellowship as well as Award No.~2202839.

\bibliographystyle{abbrv} \bibliography{bibfile}
\end{document}